\documentclass[11pt]{amsart}
\usepackage{amssymb,amsmath,txfonts,mathrsfs,mathtools,titletoc, tikz, stmaryrd}
\usepackage[alphabetic]{amsrefs}
\usepackage{float}
\usepackage[colorlinks, linkcolor=blue,anchorcolor=Periwinkle,
citecolor=red,urlcolor=Emerald]{hyperref}
\usepackage{xcolor}
\usepackage{enumitem}
\usepackage{geometry,array} 
\usepackage{graphicx}
\usepackage{subfigure}
\usepackage{bookmark}
\usepackage{tikz}\usetikzlibrary{matrix}
\usepackage{url}
\usepackage{braids}

\usetikzlibrary{decorations.markings}
\tikzset{->-/.style={decoration={  markings,  mark=at position #1 with
			{\arrow{>}}},postaction={decorate}}}
\tikzset{-<-/.style={decoration={  markings,  mark=at position #1 with
			{\arrow{<}}},postaction={decorate}}}
\usepackage[all]{xy}
\usetikzlibrary{arrows}
\usetikzlibrary{decorations.pathreplacing,decorations.pathmorphing,shadings,fadings,calc}
\usepackage{extarrows}
\usepackage{tikz-3dplot}

\usetikzlibrary{matrix}
\usepackage[all]{xy}
\usetikzlibrary{arrows,shapes}
\usetikzlibrary{calc}
\tikzset{->-/.style={decoration={  markings,  mark=at position #1 with
			{\arrow{>}}},postaction={decorate}}}
\tikzset{-<-/.style={decoration={  markings,  mark=at position #1 with
			{\arrow{<}}},postaction={decorate}}}
\usepackage[]{hyperref}
\numberwithin{equation}{section}

\usetikzlibrary{decorations.markings}
\tikzset{->-/.style={decoration={  markings,  mark=at position #1 with
			{\arrow{>}}},postaction={decorate}}}
\tikzset{-<-/.style={decoration={  markings,  mark=at position #1 with
			{\arrow{<}}},postaction={decorate}}}

\usepackage{lscape}
\usepackage{CJK}

\newtheorem{theorem}{Theorem}[section]

\newtheorem{lemma}[theorem]{Lemma}
\newtheorem{remark}[theorem]{Remark}

\newtheorem{cor}[theorem]{Corollary}

\numberwithin{equation}{section}

\def\R{{\bf\mathbb R} }

\begin{document}
\title[Brezis-Merle inequality for $k$-Hessian equation]{Uniform estimates and Brezis-Merle type inequalities for  the $k$-Hessian equation}
\email{}
\date{\today}

\author{Jie Deng}
\author{Haibin Wang}
\author{Bin Zhou}

\address{School of Mathematical Sciences, Peking University, Beijing 100871, China}
\email{dj0401@stu.pku.edu.cn}
\address{School of Mathematical Sciences, Peking University, Beijing 100871, China}
\email{haibinwang@pku.edu.cn}
\address{School of Mathematical Sciences, Peking University, Beijing 100871, China}
\email{bzhou@pku.edu.cn}

\thanks {This research is partially supported by  National Key R$\&$D Program of China 2023YFA009900 and NSFC  Grant 12271008.}

\begin{abstract}
In this paper, we prove a Brezis-Merle type inequality for $k$-convex functions vanishing on the boundary. 
As an application, we establish an Alexandrov-Bakelman-Pucci type estimate for the intermediate Hessian equation. 
Furthermore, we establish a concentration-compactness principle for the blow-up behavior of solutions to
the mean field type $k$-Hessian equation.
\end{abstract}
	
\subjclass[2000]{Primary: 35J96, 35B44.}

\keywords{Brezis-Merle inequality; $k$-Hessian equation; concentration-compactness principle.}

\maketitle
	
\titlecontents{section}[0em]{}{\hspace{.5em}}{}{\titlerule*[1pc]{.}\contentspage}
\titlecontents{subsection}[1.5em]{}{\hspace{.5em}}{}{\titlerule*[1pc]{.}\contentspage}
\tableofcontents
	
\section{Introduction}

Let $\Omega$ be a bounded smooth domain in $\R^n$. For $u\in C^2(\Omega)$, $k=1,\dots, n$, the {\it $k$-Hessian} operator $S_k$ is 
\begin{align}
S_k[u]=\sum_{1\le i_1<...<i_k\le n}\lambda_{i_1}\cdots\lambda_{i_k}, \nonumber
\end{align}
where $\lambda_1,...,\lambda_n$ are the eigenvalues of the Hessian matrix $D^2u$. When $k=1$, it is the Laplacian operator.
Let 
\[\Phi^k(\Omega)=\{u\in C^2(\Omega)\, | \, S_j[u]\ge 0 ,  \forall j=1,...,k\}\]
be the set of smooth $k$-convex functions. $k$-convex functions are admissible functions
for $S_k$ to be elliptic.
 The notion of $k$-convexity can be extended to general upper semicontinuous functions.
An upper semicontinuous function $u:\Omega \to [-\infty, +\infty)$ is called {\it $k$-convex} in $\Omega$ if $S_k[q]\geq 0$ for all quadratic polynomials $q$ for which the diﬀerence $u-q$ has a finite local maximum in $\Omega$.
In a series of papers \cites{TW97, TW99, TW02}, Trudinger-Wang introduced the Hessian measures for general $k$-convex functions and
developed the weak solution theory for the $k$-Hessian equation
\begin{equation}\label{k-hess-eq}
S_k[u]=f.
\end{equation}
A key property shown in  \cite{TW99} is the weak continuity of $S_k[u]$. Namely, if a sequence of $k$-convex functions $\{u_m\}$ converges  to a $k$-convex function $u$ in $L^1_{\text{loc}}(\Omega)$, then the $k$-Hessian measures $\mu_k[u_m]$ converge weakly to the $k$-Hessian measure $\mu_k[u]$.

The uniform estimate for the  $k$-Hessian equations was established by \cite{CW01} under the assumption that 
$f\in L^p(\Omega)$ where $p>\frac{n}{2k}$ if $k\leq \frac{n}{2}$ or $p=1$ if $k>\frac{n}{2}$,
 based on the Hessian Sobolev inequalities \cites{Wan94, Wan09}.

In a famous paper \cite{BM91}, Brezis and Merle investigated uniform estimates and blow-up behavior 
for equation
\[-\triangle u=V(x) e^u\]
based on the following basic inequality:
\[\int_\Omega \exp\left[\frac{(4\pi-\delta) |u(x)| }{\|\triangle u\|_{L^1(\Omega)}}\right]\,dx\leq \frac{4\pi^2}{\delta}(\mathrm{diam}(\Omega))^2\]
for all $\delta\in (0,4\pi)$ and $u$ on a bounded domain $\Omega\subset\mathbb R^2$ with vanishing boundary value. Subsequently, in higher dimensions, Aguilar and Peral \cite{AP94} established the inequality
\[\int_\Omega \exp\left[\frac{(n^{\frac{n}{n-1}}\omega_{n}^{\frac{1}{n-1}}-\delta) |u(x)| }{\|f\|^{\frac{1}{n-1}}_{L^1(\Omega)}}\right]\,dx\leq \frac{n^{\frac{n}{n-1}}\omega_{n}^{\frac{1}{n-1}}}{\delta}(\mathrm{diam}(\Omega))^2\]
for $\Omega \subset \mathbb{R}^n$, $\delta \in (0, n^{\frac{n}{n-1}}\omega_{n}^{\frac{1}{n-1}})$, where $\omega_n$ denotes the volume of the unit ball in $\mathbb{R}^n$, and $u$ vanishes on the boundary with $-\operatorname{div}(|\nabla u|^{n-2}\nabla u) = f \in L^1(\Omega)$.
An inequality of similar form for the complex Monge-Amp\`ere operator was obtained by \cites{De09, AC09, BB}.
In this paper, we study the analogous theory of $k$-Hessian equations.

Let $\Phi^k_0(\Omega)$ be the subset of functions in $\Phi^k(\Omega)$ with vanishing boundary value.
When $\partial \Omega$ is $(k-1)$-convex, i.e., there exists a constant $c_0>0$ such that
\[
\sum_{1\le i_1<\dots<i_j\le n-1}\kappa_{i_1}(x)\cdots \kappa_{i_j}(x)\ge c_0
\]
for all $x\in\partial\Omega$ and $1\le j\le k-1$, where $\kappa_1(x),\dots,\kappa_{n-1}(x)$ are the principal curvatures of $\partial\Omega$ at $x$, we have $\Phi^k_0(\Omega)\neq\emptyset$.
We define the {\it $k$-Hessian integral}
\[I_k(u)=\int_{\Omega}(-u)S_k[u]\,dx\]
and the {\it $k$-Hessian mass}
\[\mathcal M_k(u)=\int_{\Omega}S_k[u]\,dx.\]
Then 
\[||u||_{\Phi^k_0(\Omega)}:=I_k(u)^{\frac{1}{k+1}}\] 
is a norm on $\Phi^k_0(\Omega)$.  As we mentioned above, the $k$-Hessian measure can be defined for 
general $k$-convex functions. We denote by $\mathcal F^k(\Omega)$ the set of all $k$-convex functions $u$
such that $\mathcal M_k(u)=\mu_k[u](\Omega)<+\infty$ and $\displaystyle\lim_{y\to x}u(y)=0$ for any $x\in\partial\Omega$.

The first result of this paper is the following Brezis-Merle type inequality for the $k$-Hessian equations.

\begin{theorem}\label{thm BM inequality} Let $u\in \mathcal F^k(\Omega)$.
\begin{enumerate}
\item [(1)] If  $1\leq k<n/2$, then for $1\leq p \leq \frac{kn}{n-2k}$, there exists $C=C(n, k, p, |\Omega|)$ such that
\[ ||u ||_{L^p(\Omega)}\leq  C\mathcal M_k(u)^{1/k}.\]

\item [(2)] If $k = \frac{n}{2}$, then for $0 < \lambda < \alpha_0$ and $1 \leq \beta \leq \beta_0$, there exists $C=C(n)$ such that the following Brezis--Merle type inequality holds:
\begin{align}
\sup\left\{\int_{\Omega}\exp\left(\lambda\left(\frac{-u}{\mathcal{M}_{k}(u)^{\frac{1}{k}}}\right)^{\frac{k\beta}{k+1}}\right)\,dx: u\in \mathcal F^k(\Omega),\,0<||u||_{\Phi_0^k(\Omega)}<\infty\right\}\leq \frac{\alpha_0C(n) }{\alpha_0-\lambda}|\Omega|,
\end{align}
\end{enumerate}
where
\begin{equation}\label{apbe}
\alpha_0 = n \left[ \omega_{n} \binom{n}{k} \right]^{2/n}, \quad 
\beta_0 = \frac{n+2}{n}.
\end{equation}
\end{theorem}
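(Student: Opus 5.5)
The plan is to reduce both parts to estimates for the Hessian integral $I_k(u)$ in terms of the mass $\mathcal M_k(u)$, and then apply the Hessian Sobolev inequality of Wang \cites{Wan94, Wan09} and, in the critical case $k=n/2$, the Hessian Moser--Trudinger inequality (Tian--Wang). First we approximate. By the Trudinger--Wang theory, any $u\in\mathcal F^k(\Omega)$ is the decreasing limit of smooth $u_j\in\Phi^k_0(\Omega_j)$ on smooth $(k-1)$-convex approximating domains. Weak continuity of the Hessian measures gives $\mathcal M_k(u_j)\to\mathcal M_k(u)$, and monotone convergence passes the $L^p$ and exponential integrals to the limit. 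So we may assume $u$ is smooth, $u=0$ on $\partial\Omega$, and $u<0$ inside.

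\textbf{Key step: the level-set estimate.} The core tool is a level-set estimate comparing $u$ with its truncations. For $t>0$ set $u_t=\max(u,-t)$. This function lies in $\mathcal F^k(\Omega)$ and its Hessian measure is supported in $\{u\ge -t\}$. Integrating by parts on the sublevel set $\{u>-t\}$ (or arguing by comparison), we expect
\[
I_k(u_t)=\int_\Omega (-u_t)\,d\mu_k[u_t]\le t\,\mathcal M_k(u_t)\le t\,\mathcal M_k(u).
\]
The last inequality is a comparison-principle statement: total mass is controlled by boundary gradient behaviour, and $|Du_t|=|Du|$ near $\partial\Omega$. Next we apply the Hessian Sobolev inequality $\|v\|_{L^{q}}\le C\,I_k(v)^{1/(k+1)}$, with $q=\frac{n(k+1)}{n-2k}$, to $v=u_t$. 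Since $|u_t|=t$ on $\{u<-t\}$, this gives
\[
t\,|\{u<-t\}|^{1/q}\le C\,(t\,\mathcal M_k(u))^{1/(k+1)},
\]
that is,
\[
|\{-u>t\}|\le C\,t^{-\frac{nk}{n-2k}}\,\mathcal M_k(u)^{\frac{n}{n-2k}}.
\]
This is exactly a weak-$L^{nk/(n-2k)}$ bound for $u/\mathcal M_k(u)^{1/k}$. Strong $L^p$ bounds for $p<\frac{kn}{n-2k}$ then follow by integrating the distribution function together with $|\Omega|$.

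\textbf{Endpoint $p=\frac{kn}{n-2k}$.} Weak type alone does not give the endpoint, so we use a dyadic decomposition. Apply the same argument to the layers $w_j=\max(u,-2^{j+1})-\max(u,-2^j)$, shifted to vanish on $\{u=-2^j\}$. The key point is that the masses on the disjoint shells add up to at most $\mathcal M_k(u)$. The Sobolev inequality on each layer gives $\|w_j\|_{L^q}^{k+1}\le C\,2^j\mu_j$, where $\mu_j$ is the mass on the $j$-th shell. Summing, with $q\ge p$ and the embedding $\ell^{k+1}\subset\ell^{p}$ handled via Hölder, should yield the endpoint. I expect this bookkeeping to be the main obstacle, in particular justifying superadditivity of the mass across the truncations.

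\textbf{Part (2), $k=n/2$.} Here we use the Tian--Wang Moser--Trudinger inequality
\[
\int_\Omega \exp\bigl(\alpha\,(|v|/\|v\|_{\Phi^k_0})^{\frac{n+2}{n}}\bigr)\le C|\Omega|
\quad\text{for }\alpha\le\alpha_0.
\]
We apply it to $u_t$, using $\|u_t\|_{\Phi^k_0}^{k+1}\le t\,\mathcal M_k(u)$, which gives
\[
|\{-u>t\}|\le C|\Omega|\exp\bigl(-\alpha_0\,(t/\mathcal M_k^{1/k})^{\frac{k}{k+1}\beta_0}\bigr)
\]
(note $\frac{n+2}{n}\cdot\frac{1}{k+1}\cdot k$ matches the exponent $\frac{k\beta_0}{k+1}$). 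Writing $s=(t/\mathcal M_k^{1/k})^{k\beta/(k+1)}$, the layer-cake formula gives
\[
\int e^{\lambda s}\le|\Omega|+\int_0^\infty\lambda e^{\lambda s}\,|\{\,\cdot>s\}|\,ds\le|\Omega|\Bigl(1+\tfrac{C\lambda}{\alpha_0-\lambda}\Bigr).
\]
This is the case $\beta=\beta_0$. For $\beta<\beta_0$, we split into $s\le1$ and $s>1$ and use $s^{\beta/\beta_0}\le 1+s$ on the region $s>1$.
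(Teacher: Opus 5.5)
\textbf{The endpoint $p=\frac{kn}{n-2k}$ is a genuine gap, and it cannot be closed.} Your dyadic step assumes that the layers $w_j$ see disjoint shell masses $\mu_j$ with $\sum_j\mu_j\le\mathcal M_k(u)$. That is false, because truncation does not localize Hessian mass. The measure $\mu_k[\max(u,-s)]$ has the same total mass as $\mu_k[u]$ but vanishes on $\{u<-s\}$: the mass $\mu_k[u](\{u<-s\})$ is moved onto the level set $\{u=-s\}$. So the Sobolev inequality on the $j$-th layer only gives $I_k(w_j)\lesssim 2^j\nu_j$ with $\nu_j=\mu_k[u](\{u<-2^j\})$. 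This is a non-increasing sequence, not a family of disjoint shell masses, and $\sum_j\nu_j^{n/(n-2k)}$ need not converge.

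This is not an artifact of your method; the endpoint claim of part (1) is false. On $\Omega=B_R$, take the radial function $u(x)=-c\bigl(|x|^{-\frac{n-2k}{k}}-R^{-\frac{n-2k}{k}}\bigr)$ with $c=c(n,k)$ chosen so that $\mu_k[u]=\delta_0$. It is $k$-convex, lies in $\mathcal F^k(\Omega)$ and has $\mathcal M_k(u)=1$. For this $u$ we have $\nu_j=1$ for all $j$. Yet $|u|^{\frac{kn}{n-2k}}\sim|x|^{-n}$ is not integrable at the origin. Smoothing the profile inside $B_\varepsilon$ gives functions $u_\varepsilon\in\Phi^k_0(B_R)$ of mass $1$ with $\|u_\varepsilon\|_{L^{p^*}}^{p^*}\gtrsim\log(R/\varepsilon)$, where $p^*=\frac{kn}{n-2k}$.

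The correct statement is what your first step already proves: the weak-type bound $|\{u<-t\}|\le C\,t^{-\frac{kn}{n-2k}}\mathcal M_k(u)^{\frac{n}{n-2k}}$, and hence part (1) for $1\le p<\frac{kn}{n-2k}$. The paper's proof has the same limitation without saying so. Its computation produces the factor $\bigl(\frac{kq}{k+1}-p\bigr)^{-1}$ with $\frac{kq}{k+1}\le\frac{kn}{n-2k}$, and this factor is infinite at the endpoint.

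Away from the endpoint your argument is correct and takes a different route from the paper. The paper proves $\operatorname{Cap}_k(K_t,\Omega)\le\mathcal M_k(u)/t^k$ by applying the comparison principle (Theorem~\ref{comparison}) to smooth approximants of $u$ against $t\,u_{K_t}$. It then invokes the isocapacitary inequalities of Theorem~\ref{iso}. You bypass capacity entirely: you test the Hessian Sobolev and Moser--Trudinger inequalities (Theorem~\ref{thm MT type inequality}) on $\max(u,-t)$, using only the trivial bound $I_k(\max(u,-t))\le t\,\mathcal M_k(u)$. The two mechanisms are closely related, but yours is more elementary and avoids the delicate comparison step.

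What the paper's route buys is the form of the constant in (2). Theorem~\ref{iso}(ii) gives $C(n)|\Omega|$, whereas the Moser--Trudinger constant as cited depends on $\operatorname{diam}(\Omega)$. Your claimed bound $C|\Omega|$ therefore needs a volume-normalized version of that inequality.

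Your preliminary approximation is also misjustified. Weak continuity only gives $\mathcal M_k(u)\le\liminf_j\mathcal M_k(u_j)$, which is the wrong direction. The approximation is unnecessary anyway. Since $u\to0$ at $\partial\Omega$, $\max(u,-t)$ agrees with $u$ near $\partial\Omega$, so $\mathcal M_k(\max(u,-t))=\mathcal M_k(u)$ directly. Only the bounded function $\max(u,-t)$ needs to be approximated in order to apply the Sobolev and Moser--Trudinger inequalities.
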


With the Brezis-Merle type inequality in the intermediate case($k=\frac{n}{2}$), we can improve the uniform estimate and get an ABP type estimate.

 \begin{theorem}\label{thm L infty estimate}
Let $\Omega$ be a bounded $(k-1)$-convex smooth domain, $k=n/2$ and $u \in C^2(\Omega) \cap C^0(\overline{\Omega})$. Let $\Phi: \mathbb{R} \to \mathbb{R}^+$ be a positive increasing function with 
\[\int_0^\infty \Phi^{-1/k}(t) \,dt \le \Lambda < \infty.\]  
Then there exist constants $c_1=c_1(n,|\Omega|), c_2=c_2(n,\Lambda)>0$ such that
\begin{align}\label{k hessian L infty estimate}
\sup_{\Omega} u \le \sup_{\partial \Omega}u+c_1 + c_2 \left( \int_{\Gamma^-} S_k[-u] \Phi(\log(S_k[-u])) \,dx \right)^{1/k},
\end{align}
where $\Gamma^- = \{ x \in \Omega \mid u(x) > 0 \text{ and } S_k[-u](x) > 0 \}$.
\end{theorem}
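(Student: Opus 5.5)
We prove Theorem \ref{thm L infty estimate} by comparison with the solution of an auxiliary Dirichlet problem, and then apply the Brezis--Merle inequality of Theorem \ref{thm BM inequality}(2) to that solution. Normalize first: replacing $u$ by $u-\sup_{\partial\Omega}u$, we may assume $\sup_{\partial\Omega}u=0$. Then $\{u>0\}$ is compactly contained in $\Omega$. Set $f=S_k[-u]\chi_{\Gamma^-}\ge 0$. Using the solvability theory of Trudinger--Wang on $(k-1)$-convex domains, with an approximation argument for $f\in L^1$, let $w\in\mathcal F^k(\Omega)$ solve
\[ S_k[w]=f \text{ in } \Omega,\qquad w=0 \text{ on } \partial\Omega. \]

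\textbf{Comparison step.} We show $u\le -w$ on $\Omega$. Consider $v=-u$ on the open set $U=\{u>0\}$. On $\partial U\subset\{u=0\}$ we have $v=0\ge w$. At points of $U$ where $-u$ is $k$-convex, $S_k[v]=S_k[-u]=f=S_k[w]$. The difficulty is that $-u$ need not be $k$-convex everywhere in $U$.

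To handle this, replace $-u$ by its largest $k$-convex minorant in $U$ with boundary data $0$; call it $\tilde v$. Its Hessian measure is supported on the contact set $\{\tilde v=-u\}$. At contact points $D^2(-u)$ lies in the closure of the admissible cone, so $S_k[\tilde v]\le S_k[-u]\chi_{\Gamma^-}$ there. This is the analogue of the ABP upper contact set argument.

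The comparison principle for $k$-Hessian measures then gives $\tilde v\ge w$ in $U$, hence $u\le -\tilde v\le -w$. I expect this step to be the main obstacle. One must justify $S_k[\tilde v]\le f$ as measures. This is possible via the Trudinger--Wang weak continuity, by regularizing the envelope. Points with $S_k[-u]=0$ but $-u$ not $k$-convex should be harmless, since the envelope does not touch $-u$ there. If this becomes delicate, the fallback is to approximate by $u_\epsilon$ with strictly $k$-convex perturbations.

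\textbf{Estimate for $w$.} Set $M=\mathcal M_k(w)^{1/k}=\big(\int_{\Gamma^-}S_k[-u]\,dx\big)^{1/k}$. Theorem \ref{thm BM inequality}(2) with $\beta=1$ and $\lambda=\alpha_0/2$ gives
\[ \int_\Omega \exp\bigl(\tfrac{\alpha_0}{2}(-w/M)^{k/(k+1)}\bigr)\le 2C(n)|\Omega|. \]
This controls the measure of the level sets $\{-w>t\}$ but not $\sup(-w)$ directly, because $f$ is only in $L^1$. The integrability assumption on $\Phi$ upgrades it.

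\textbf{Upgrading via $\Phi$.} Split $f=f\chi_{\{f\le e^{s}\}}+f\chi_{\{f>e^{s}\}}$ dyadically in $s$, or equivalently use a layer-cake argument. Write $-w$ as a superposition of solutions $w_j$ with data $f_j=f\chi_{\{e^{j}<f\le e^{j+1}\}}$. This uses sub-additivity of $(\cdot)^{1/k}$-type solution operators, obtained from comparison together with concavity of $S_k^{1/k}$.

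For each piece, the bounded-density estimate of \cite{CW01} or the $L^p$ theory gives
\[ \sup(-w_j)\le C\,\|f_j\|_\infty^{1/k}|\operatorname{supp}f_j|^{\theta}, \]
or alternatively $\sup(-w_j)\le C\,\mathcal M_k(w_j)^{1/k}\cdot(\text{log factor})$ via the Brezis--Merle level set bound. Writing $m_j=\int f_j$ and using $\int f\Phi(\log f)\ge\sum_j \Phi(j)m_j=:A$, Hölder's inequality gives
\[ \sum_j m_j^{1/k}\le \Big(\sum_j \Phi(j)^{-1/(k-1)}\Big)^{(k-1)/k}A^{1/k}. \]
The sum on the right is controlled by $\Lambda$ through monotonicity of $\Phi$, although tuning exponents to match $\int\Phi^{-1/k}$ exactly is technical. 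The low-density part $f\le 1$ is absorbed into $c_1(n,|\Omega|)$. This yields $\sup u\le c_1+c_2\big(\int_{\Gamma^-}S_k[-u]\Phi(\log S_k[-u])\big)^{1/k}$.
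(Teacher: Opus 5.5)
\textbf{There is a genuine gap, in the step ``Upgrading via $\Phi$''.}

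Several parts of your proposal are sound. Working with smooth approximations of $f$, the bound $u\le -w$ holds. The sub-additivity $-w\le\sum_j(-w_j)$ also holds, by superadditivity of $S_k^{1/k}$ on $\Gamma_k$ plus comparison. The comparison step needs no envelope. Approximate $f$ from above by smooth $g\ge f+\varepsilon$, as the paper does with $e^{G_i}$, and let $w_g$ be the smooth solution. At an interior positive maximum of $u+w_g$ one has $D^2(-u)\ge D^2w_g$, so the point lies in $\Gamma^-$ and $S_k[-u]\ge g>f=S_k[-u]$ there, a contradiction.

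The failure is in the upgrading step. Your H\"older computation tacitly uses $\sup(-w_j)\lesssim m_j^{1/k}$, and none of the cited estimates gives this. Since $e^j<f_j\le e^{j+1}$ on $E_j=\operatorname{supp}f_j$, the bound $\|f_j\|_\infty^{1/k}|E_j|^\theta$ is comparable to $e^{(1/k-\theta)j}m_j^\theta$. For $\theta<1/k$ this carries an exponentially growing factor. For $\theta=1/k$ the bound is false at the critical exponent $k=n/2$: already $f=\chi_{B_r}$ in $B_R$ gives $\sup(-w)\approx r^2\log(R/r)$. More seriously, no per-piece estimate can save the scheme. For $k\ge 2$ the sum $\sum_j\sup(-w_j)$ itself can be infinite while the right-hand side of the theorem is finite.

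\textbf{Counterexample.} For radial data with $k=n/2$ one has $\int_{B_\rho}S_k[w]\,dx=\binom{n}{k}\omega_n(\rho w'(\rho))^k$.
\begin{itemize}
\item Take $f(x)=|x|^{-n}g(|x|)$ on $B_R$ with $0<g\le 1$ slowly varying. Then $E_j\subset B_{e^{-j/n}}$, so $-w_j(0)\ge\bigl(m_j/\tbinom{n}{k}\omega_n\bigr)^{1/k}(\log R+j/n)$.
\item Let $\Phi(t)=\max\{t,e\}^k(\log\max\{t,e\})^{2k}$, so that $\int_0^\infty\Phi^{-1/k}<\infty$. Choose $g$ with $m_j\approx j^{-k-1}(\log j)^{-2k-2}$.
\item Then $\int f\Phi(\log f)<\infty$, since $\Phi(j+1)m_j\approx j^{-1}(\log j)^{-2}$.
\item But $\sum_j j\,m_j^{1/k}\approx\sum_j j^{-1/k}(\log j)^{-2-2/k}=\infty$.
\item Meanwhile $-w(0)=\int_0^R\rho^{-1}\bigl(M(\rho)/\tbinom{n}{k}\omega_n\bigr)^{1/k}d\rho<\infty$, because the enclosed mass is $M(e^{-t})\approx t^{-k}(\log t)^{-2k-2}$.
\end{itemize}
Truncating $f$ at height $e^J$ gives admissible data on which your bound grows with $J$ while the right-hand side stays bounded. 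The loss comes from splitting itself: it replaces $\bigl(\sum_{j\gtrsim nt}m_j\bigr)^{1/k}$ by $\sum_{j\gtrsim nt}m_j^{1/k}$ at every scale $t$.

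\textbf{How the paper avoids this.} It never splits $f$. It solves one problem $S_k[\psi_1]=e^{G_i}\Phi(G_i)/N_i$ of unit mass, to which Theorem~\ref{thm BM inequality}(2) applies. It then composes with $h(s)=-\int_s^\infty\frac{q}{\alpha}N_i^{1/k}\Phi(t)^{-1/k}\,dt$. The resulting $k$-convex $\psi$ satisfies $0<\psi\le\frac{q}{\alpha}\Lambda N_i^{1/k}$, which is exactly where $\int\Phi^{-1/k}\le\Lambda$ enters. It also satisfies $e^{G_i}\le S_k[\psi]+F$ with $F$ bounded in $L^q$. The remainder $F$ is handled by the auxiliary $\psi_{s,j}$ and De Giorgi's lemma.

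\textbf{An alternative that keeps your reduction.} You could keep $w$ and run a Ko\l{}odziej-type iteration on the level sets $\{w<-s\}$. Combine three ingredients:
\begin{itemize}
\item $t^k\operatorname{Cap}_k(\{w<-s-t\})\le\int_{\{w<-s\}}f$;
\item Theorem~\ref{iso}(ii) with $\beta=1+\frac1k$;
\item $\int_E f\le L|E|+\Phi(\log L)^{-1}\int f\Phi(\log f)$.
\end{itemize}
This also keeps the mass under a single $k$-th root and yields a series comparable to $\int\Phi^{-1/k}$.
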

 
We can also apply the Brezis-Merle type inequality to study the 
concentration‑compactness principle for the intermediate Hessian equation. Assume $k=n/2$.
For $\gamma>0$ and $u\in\Phi_0^k(\Omega)$, we introduce the 
functional
\begin{equation}\label{g-func}
	G_{\gamma}(u)=\frac{1}{\gamma}\log\bigg(\frac{1}{|\Omega|}\int_{\Omega}
	e^{-\gamma u}\,dx\bigg)+\frac{1}{k+1}\int_{\Omega}u\,S_k[u]\,dx,
\end{equation}
whose Euler--Lagrange equation is the mean field type $k$-Hessian equation
\begin{equation}\label{mean field equation of k}
	S_k[u]=\frac{e^{-\gamma u}}{\int_{\Omega}e^{-\gamma u}\,dx}.
\end{equation}
Note that any solution of \eqref{mean field equation of k} automatically satisfies 
$\int_{\Omega}S_k[u]\,dx=1$.  
When $k=\frac{n}{2}$, the critical parameter for the Moser--Trudinger type 
inequality is $\frac{k+1}{k}\alpha_0$, where $\alpha_0$ is given in \eqref{apbe}.  

Given a sequence $\{\gamma_j\}$ increasing to $\frac{k+1}{k}\alpha_0$, let 
$u_j\in\Phi_0^k(\Omega)$ be corresponding solutions of 
\eqref{mean field equation of k} that maximize $G_{\gamma_j}$.  
A point $x_0\in\Omega$ is called a \emph{blow-up point} for the sequence 
$\{u_j\}$ if there exists a sequence $\{x_j\}\subset\Omega$ converging to 
$x_0$ such that $\lim\limits_{j\to\infty}u_j(x_j)=-\infty$.  
With these notions we establish the following concentration‑compactness 
principle.

\begin{theorem}\label{mean-field-compactness}
	Assume that $u_j\to u$ in $L^1_{\rm loc}(\Omega)$ and that $\{u_j\}$ has 
	no blow-up points on the boundary $\partial\Omega$.  Then one of the 
	following alternatives holds:
	\begin{enumerate}
		\item[(1)] The limit $u$ is a weak solution of 
		\eqref{mean field equation of k} with $\gamma=\frac{k+1}{k}\alpha_0$.
		\item[(2)] There exists $x_0\in\Omega$ such that $\mu_k[u]=\delta_{x_0}$, and
		\[
		u(x)=\Bigl[\binom{n}{k}\omega_n\Bigr]^{\frac{1}{k}}\log|x-x_0|+O(1).
		\]
	\end{enumerate}
\end{theorem}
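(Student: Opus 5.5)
The plan is a Brezis--Merle style dichotomy, driven by the measures $\mu_j:=\mu_k[u_j]=S_k[u_j]\,dx$. These are probability measures on $\Omega$, since every solution of \eqref{mean field equation of k} has total mass $1$. After passing to a subsequence, $\mu_j\rightharpoonup\mu$ weakly as measures on $\overline\Omega$. By the weak continuity of Trudinger--Wang, $\mu_k[u]=\mu$ in $\Omega$, because $u_j\to u$ in $L^1_{\rm loc}$.

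Define the singular set
\[
\Sigma=\{x\in\overline\Omega:\ \mu(\{x\})\ge \epsilon_0\},
\]
with $\epsilon_0>0$ to be fixed. Since $\mu(\overline\Omega)\le 1$, $\Sigma$ is finite.

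\textbf{Step 1: local regularity away from $\Sigma$.} Take $x\notin\Sigma$. Then there is a ball $B=B_r(x)$ with $\mu_j(B)\le\epsilon_0$ for all large $j$. On $B$, split $u_j=v_j+h_j$ as follows.
\begin{itemize}
\item $v_j\in\mathcal F^k(B)$ solves $S_k[v_j]=S_k[u_j]$ in $B$ with $v_j=0$ on $\partial B$; it exists by the Trudinger--Wang theory.
\item $h_j$ is the remainder. By comparison, $u_j\ge v_j+\min_{\partial B}u_j$.
\end{itemize}
Apply Theorem~\ref{thm BM inequality}(2) to $v_j$ with $\beta=(k+1)/k\le\beta_0$, so that $\frac{k\beta}{k+1}=1$. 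Since $\mathcal M_k(v_j)\le\epsilon_0$, this gives
\[
\int_B e^{-\lambda v_j/\epsilon_0^{1/k}}\le C.
\]
Choosing $\epsilon_0$ small makes $\gamma_j<\lambda/\epsilon_0^{1/k}$ with room to spare, so $e^{-\gamma_j v_j}$ is bounded in $L^q(B)$ for some $q>1$.

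The missing ingredient is a lower bound on $\min_{\partial B}u_j$, or equivalently an upper bound on $\int_\Omega e^{-\gamma_j u_j}$ against $e^{-\gamma_j u_j}$ near $x$. I will get this from a local $L^1$ bound on $u_j$ near $\partial B$ together with the weak Harnack / sub-mean value property for $k$-convex functions: the functions $-u_j$ are subharmonic when $k\ge1$, so $\sup_{\partial B}(-u_j)$ is controlled by $L^1$ averages on a slightly larger ball. With that bound, the right-hand side of \eqref{mean field equation of k} is bounded in $L^q(B_{r/2})$. The $L^\infty$ estimate of Theorem~\ref{thm L infty estimate}, or Chou--Wang for $p>1=\frac{n}{2k}$, then gives $u_j$ locally bounded. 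So near $x$, $u_j\to u$ locally uniformly, and there is no blow-up at $x$.

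\textbf{Step 2: locating the blow-up.} By Step 1, blow-up points lie in $\Sigma$. By hypothesis none lie on $\partial\Omega$, so $\Sigma\cap\partial\Omega$ carries no concentration. If $\Sigma=\emptyset$, then $u_j$ is locally bounded, and the boundary assumption upgrades this to a uniform bound up to $\partial\Omega$. Hence $\int_\Omega e^{-\gamma_j u_j}$ converges, and passing to the limit in the equation via weak continuity shows $u$ is a weak solution with $\gamma=\frac{k+1}{k}\alpha_0$. This is alternative (1).

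If $\Sigma\neq\emptyset$, then $\int_\Omega e^{-\gamma_j u_j}\to\infty$. Otherwise the densities $e^{-\gamma_j u_j}/\!\int_\Omega e^{-\gamma_j u_j}$ would be uniformly integrable near regular points, and concentration at a point would contradict the local $L^q$ bounds obtained by the same Brezis--Merle argument. It follows that on $\Omega\setminus\Sigma$ the right-hand side tends to $0$ locally uniformly. Therefore $\mu$ is supported on $\Sigma$, giving $\mu=\sum_i m_i\delta_{x_i}$ with $\sum_i m_i=1$.

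\textbf{Step 3: single point and asymptotics.} To show $\Sigma$ is a single point carrying all the mass, I will use that $u_j$ maximizes $G_{\gamma_j}$ and that $\gamma_j$ increases to the critical value $\frac{k+1}{k}\alpha_0$.
\begin{itemize}
\item If there were two points with $m_i<1$, then by Step 1 the quantized mass needed for blow-up at the critical exponent would have to be at least the full mass $1$. Concretely, Theorem~\ref{thm BM inequality} with $\lambda<\alpha_0$ shows that concentration at exponent $\gamma\approx\frac{k+1}{k}\alpha_0$ requires local mass $\mu(\{x_i\})\ge1$.
\item This forces $\mu=\delta_{x_0}$.
\end{itemize}
The limit $u$ is then a $k$-convex function with $\mu_k[u]=\delta_{x_0}$ and zero boundary values. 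Comparison with the fundamental solution of $S_k$ for $k=n/2$, namely $c\log|x-x_0|$ with $S_k$-mass $1$, which fixes $c=[\binom nk\omega_n]^{1/k}$, gives the stated asymptotics $u=c\log|x-x_0|+O(1)$. This is alternative (2).

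\textbf{Main obstacle.} I expect the hardest step to be the sharp mass quantization in Step 3, namely showing that a blow-up point at the critical exponent must carry mass $1$. My first attempt will be to rescale the Brezis--Merle bound with the optimal constant $\alpha_0$. A secondary difficulty is the lower bound on $\min_{\partial B}u_j$ in Step 1.
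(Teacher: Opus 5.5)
The decisive gap is in Step 3. The Brezis--Merle constant is $\alpha_0$, but the critical parameter of \eqref{mean field equation of k} is $\frac{k+1}{k}\alpha_0$. Suppose a point carries limiting mass $m$. Theorem~\ref{thm BM inequality}(2) with $\beta=\beta_0$ makes $e^{-\gamma u_j}$ locally $L^q$ as soon as $q\gamma m^{1/k}<\alpha_0$. At $\gamma\approx\frac{k+1}{k}\alpha_0$ this only shows that a concentration point carries mass $m\ge(\frac{k}{k+1})^k<1$; for $k=1$ this is the familiar $4\pi$ versus $8\pi$ gap. No rescaling removes the factor $\frac{k+1}{k}$, so nothing in your argument excludes several concentration points. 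Closing the gap this way would need a sharp quantization result of Li--Shafrir type, which you do not supply.

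The missing idea is to actually use that $u_j$ maximizes $G_{\gamma_j}$. This lets one work at exponent $\alpha_0+\delta$ rather than $\frac{k+1}{k}\alpha_0$. The paper's dichotomy is whether $\int_\Omega e^{-(\alpha_0+\delta)u_j}$ stays bounded for some $\delta>0$.
\begin{itemize}
\item If it does, \eqref{D functional} with $t<\alpha_0+\delta$ bounds $F_{\frac{k+1}{k}\alpha_0+\delta'}(\mu_j)$ from above. Maximality together with Lemma~\ref{increase of G} bounds $F_{\gamma_j}(\mu_j)$ from below. The identity \eqref{F energy rela} then bounds $I_k(u_j)$. Theorem~\ref{thm MT type inequality} with $\beta_0>1$ controls every exponential moment, and Vitali gives alternative (1).
\item If it does not, Corollary~\ref{loc version of BM inequality}, whose threshold at exponent $\alpha_0$ is exactly mass $1$, produces $x_0$ with $\mu_k[u](\{x_0\})\ge 1$. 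Then $\mathcal M_k(u)\le 1$ forces $\mu_k[u]=\delta_{x_0}$, with no quantization needed.
\end{itemize}

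Step 1 has a second genuine gap. A $k$-convex $u_j$ satisfies $\Delta u_j\ge 0$, so it is $u_j$, not $-u_j$, that is subharmonic. Mean-value arguments therefore bound $u_j$ from above and cannot give the uniform lower bound on $\min_{\partial B}u_j$ you need. For instance, take $y\in\partial B$ and fixed $c>0$. The function $\epsilon^{1/k}c\log\max\{|x-y|,\rho\}$ is $k$-convex with mass $O(\epsilon)$ and $L^1$ norm bounded independently of $\rho$, yet it equals $\epsilon^{1/k}c\log\rho\to-\infty$ at $y$. The linear Brezis--Merle device (harmonic part plus mean value property) has no analogue for $S_k$.

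The paper avoids boundary values on small balls entirely. In Corollary~\ref{loc version of BM inequality} it replaces $u$ by the envelope $\sup\{v\in\mathcal F^k(\Omega): v\le u \text{ on } B_{1/j}(x_0)\}$. This envelope vanishes on $\partial\Omega$ and coincides with $u$ near $x_0$. By Lemma~\ref{hessian comparison}, tested against $-\max\{\delta G,-1\}$, it has total mass $<1$, so the global inequality applies directly.

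Your claim in Step 2 that $\Sigma\neq\emptyset$ forces $\int_\Omega e^{-\gamma_j u_j}\to\infty$ is circular: the local $L^q$ bounds you invoke exist only off $\Sigma$.

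Finally, two smaller points about the asymptotics. Theorem~\ref{comparison} does not cover Dirac masses, which is why the paper cites Labutin. Also, a radial computation gives $S_k$-mass $c^k\binom{n}{k}\omega_n$ for $c\log|x-x_0|$. So mass one corresponds to $c=[\binom{n}{k}\omega_n]^{-1/k}$, consistent with $e^{-\alpha_0 u}\sim|x-x_0|^{-n}$. The exponent $+1/k$ you wrote, which also appears in the displayed statement, looks like a sign slip.
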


The rest of the paper is organized as follows.  In Section~\ref{BM-pf} we 
prove the Brezis-Merle type inequality (Theorem~\ref{thm BM inequality}).  
Section~\ref{ABP-pf} is devoted to an Alexandrov--Bakelman--Pucci type 
estimate for the $k$-Hessian equation.  In Section~\ref{mean-field eq} we 
introduce the associated functionals for the mean field type equation, and 
establish their basic properties.  Finally, in Section~\ref{con-com} we 
apply the Brezis-Merle inequality and the 
Moser--Trudinger inequality to prove the concentration‑compactness 
principle (Theorem~\ref{mean-field-compactness}).
\vskip 20pt

\section{The Brezis-Merle type inequality}\label{BM-pf}

We make use of the isocapacitary inequalities for the $k$-Hessian equations to establish the Brezis-Merle type inequalities.

Recall the capacity for the $k$-Hessian equation \cites{TW02, Lab02}
\[
\operatorname{Cap}_{k}(K,\Omega)=\sup\left\{\int_{K}S_k[u]\,dx: u \; \text{is $k$-convex}, -1<u<0\right\}.
\]
A non-negative Borel measure $\mu$ on $\Omega$ is said to be \emph{continuous with respect to the $k$-Hessian capacity} if for every $\varepsilon > 0$, there exists $\delta > 0$ such that for any open set $E \subset \Omega$ with $\operatorname{Cap}_k(E, \Omega) < \delta$, we have $\mu(E) < \varepsilon$.
It follows from \cite{TW02} that $k$-Hessian measure $\mu_k[u]$ is continuous with respect to capacity if either $u$ is bounded in $\Omega$ or there exist constants $\varepsilon>0$ and $C>0$ such that 
\[\mu_k[u](B_r(x)) \le C r^{n-2k+\varepsilon}\]
whenever $B_r(x)\subset\Omega$. We will use the following isocapacitary inequalities.

\begin{theorem}\cite[Theorem 1.2]{XZ14}\label{iso}
	\begin{enumerate}
		\item[(i)] If $1\leq k<\frac{n}{2}$ and $1\leq q \leq \frac{n(k+1)}{n-2k}$, there exists a constant $C=C(n,k,q,|\Omega|)>0$ such that for any $E\subset\Omega$,
		\[
		|E| \leq C\cdot \operatorname{Cap}_k(E,\Omega)^{\frac{q}{k+1}}.
		\]
		
		\item[(ii)] If $k = \frac{n}{2}$ and $1\leq \beta \leq 1+\frac{1}{k}$, there exists a constant $C(n)>0$ such that for any $E\subset\Omega$,
		\[
		|E|\cdot \exp\left(\alpha_0\frac{1}{\operatorname{Cap}_{k}(E,\Omega)^{\frac{\beta}{k+1}}}\right)\leq C(n)|\Omega|,
		\]
		where $\alpha_0$ is given in \eqref{apbe}.
	\end{enumerate}
\end{theorem}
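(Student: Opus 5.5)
The natural route is to test the $k$-Hessian Sobolev and Moser--Trudinger inequalities on the relative extremal (capacitary) function of $E$. Those inequalities (Wang \cites{Wan94, Wan09}, Tian--Wang) bound norms of $u$ by $\|u\|_{\Phi^k_0(\Omega)}=I_k(u)^{1/(k+1)}$, and $I_k$ of the extremal function is exactly the capacity.

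\textbf{Step 1: reduction and the extremal function.} By inner and outer regularity of $|\cdot|$ and of $\operatorname{Cap}_k(\cdot,\Omega)$, it suffices to treat compact $E$. For compact $E$ we take the relative extremal function
\[
u_E=\Bigl(\sup\{v: v \text{ $k$-convex},\ -1\le v\le 0,\ v\le -1 \text{ on } E\}\Bigr)^*.
\]
Following \cite{TW02} and \cite{Lab02}, this function has the following properties:
\begin{itemize}
\item it is $k$-convex, satisfies $-1\le u_E\le 0$, and vanishes on $\partial\Omega$;
\item $u_E=-1$ on $E$, up to a set of capacity zero;
\item $\mu_k[u_E]$ is supported on $E$;
\item $\operatorname{Cap}_k(E,\Omega)=\mu_k[u_E](\Omega)$.
\end{itemize}
Since $-u_E=1$ on the support of $\mu_k[u_E]$, we get
\[
I_k(u_E)=\int_\Omega(-u_E)\,d\mu_k[u_E]=\operatorname{Cap}_k(E,\Omega).
\]

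\textbf{Step 2, case (i).} For $1\le k<n/2$ we apply the Hessian Sobolev inequality $\|u\|_{L^{q}(\Omega)}\le C\|u\|_{\Phi^k_0(\Omega)}$, valid for $q\le n(k+1)/(n-2k)$, to $u_E$. Since $|u_E|=1$ on $E$,
\[
|E|\le\int_\Omega|u_E|^q\,dx\le C\,I_k(u_E)^{q/(k+1)}=C\operatorname{Cap}_k(E,\Omega)^{q/(k+1)}.
\]
For $q$ below the critical exponent, Hölder's inequality absorbs $|\Omega|$ into the constant.

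\textbf{Step 2, case (ii).} For $k=n/2$ we use the Moser--Trudinger inequality
\[
\int_\Omega\exp\Bigl(\alpha_0\bigl(|u|/\|u\|_{\Phi^k_0}\bigr)^{\frac{k+1}{k}}\Bigr)\,dx\le C(n)|\Omega|.
\]
Restricting the integral to $E$, where $|u_E|=1$, gives
\[
|E|\exp\bigl(\alpha_0\operatorname{Cap}_k^{-1/k}\bigr)\le C|\Omega|.
\]
This is the endpoint $\beta=1+1/k$. For $1\le\beta<1+1/k$ we split into two cases:
\begin{itemize}
\item If $\operatorname{Cap}_k\le1$, then $\operatorname{Cap}_k^{-\beta/(k+1)}\le\operatorname{Cap}_k^{-1/k}$, so the endpoint bound implies the claim.
\item If $\operatorname{Cap}_k>1$, the exponential factor is at most $e^{\alpha_0}$, and $|E|\le|\Omega|$ finishes after enlarging $C(n)$.
\end{itemize}

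\textbf{Main obstacle.} The Sobolev and Moser--Trudinger inequalities are proved for smooth $u\in\Phi^k_0(\Omega)$ on $(k-1)$-convex domains, while $u_E$ is only upper semicontinuous. I would first try approximating $u_E$ by smooth $k$-convex functions vanishing on $\partial\Omega$, for instance solutions of Dirichlet problems with smoothed data, or via the Trudinger--Wang approximation. The weak continuity of $\mu_k$ \cite{TW99} then passes $I_k$ to the limit. Fatou's lemma passes the left-hand integrals to the limit, using that $u_m\to u_E$ in $L^1$ and hence a.e.

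If $\Omega$ is not $(k-1)$-convex, I would instead enlarge $\Omega$ to a ball $B\supset\Omega$. Capacity is monotone decreasing in the ambient domain, so $\operatorname{Cap}_k(E,B)\le\operatorname{Cap}_k(E,\Omega)$, and both bounds are preserved because they improve as the capacity shrinks. Here $C(n,k,q,|\Omega|)$ is allowed to depend on $|B|$, which is comparable to $|\Omega|$ in the normalized setting.
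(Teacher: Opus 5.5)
The paper gives no proof of this statement; it is quoted from \cite{XZ14}. Your route is the standard Maz'ya-type derivation: evaluate the Hessian Sobolev and Moser--Trudinger inequalities on the relative extremal function $u_E$, using $I_k(u_E)=\operatorname{Cap}_k(E,\Omega)$ and $|u_E|=1$ a.e.\ on $E$. For part (i) on a $(k-1)$-convex domain it yields exactly the stated bound. Two steps, however, do not deliver what you claim.

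\textbf{The constant in (ii).} You use the Moser--Trudinger inequality with right-hand side $C(n)|\Omega|$. The available inequality (Tian--Wang, recorded in this paper as Theorem~\ref{thm MT type inequality}(2)) has a constant depending on $n$ and $\operatorname{diam}(\Omega)$. Since $\|\cdot\|_{\Phi^k_0}$ is dilation invariant when $k=n/2$, that constant is $C(n)\operatorname{diam}(\Omega)^n$. So what your argument actually proves is
\[
|E|\exp\Bigl(\alpha_0\operatorname{Cap}_k(E,\Omega)^{-\frac{\beta}{k+1}}\Bigr)\le C(n)\operatorname{diam}(\Omega)^n .
\]
This is weaker than the statement for eccentric domains; for a thin slab, $|\Omega|\ll\operatorname{diam}(\Omega)^n$.

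Your last paragraph has the same defect. Any ball $B\supset\Omega$ has $|B|\ge c_n\operatorname{diam}(\Omega)^n$, so $|B|$ is not comparable to $|\Omega|$, and the enlargement again produces a diameter-dependent constant. To get $|\Omega|$ you need an input whose constant is controlled by volume. For $n=2$, $k=1$ this is Schwarz symmetrization (P\'olya--Szeg\H{o}); nothing in the proposal supplies an analogue for $k\ge2$.

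In part (i), by contrast, the enlargement can be repaired. At $q=k^*=\frac{n(k+1)}{n-2k}$ both sides of Wang's inequality scale identically under dilations, so on every ball its constant is $C(n,k)$. Combined with $\operatorname{Cap}_k(E,B)\le\operatorname{Cap}_k(E,\Omega)$, this gives $|E|\le C(n,k)\operatorname{Cap}_k(E,\Omega)^{k^*/(k+1)}$ for every bounded $\Omega$. Then $|E|\le|E|^{q/k^*}|\Omega|^{1-q/k^*}$ handles $q<k^*$ with the correct dependence on $|\Omega|$.

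\textbf{Passing to the limit in $I_k$.} Weak continuity of $\mu_k$ does not give what you need. If $u_m\in\Phi^k_0(\Omega)$ decrease to $u_E$, the continuous functions $-u_m$ increase to the lower semicontinuous $-u_E$. Weak convergence $\mu_k[u_m]\to\mu_k[u_E]$ then only yields $\liminf_m I_k(u_m)\ge I_k(u_E)$. That is the wrong direction: you must bound $I_k(u_m)$ from above by $\operatorname{Cap}_k(E,\Omega)$.

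The right tool is energy monotonicity: if $v\ge w$ are $k$-convex with zero boundary values, then $I_k(v)\le I_k(w)$. This follows from the mixed-Hessian integration by parts of Lemma~\ref{hessian comparison}: first $\int(-v)S_k[v]\le\int(-w)S_k[v]$, then exchange one slot at a time. It extends to bounded functions via \cite{TW02}. Applied to $u_m\ge u_E$, it gives $I_k(u_m)\le\operatorname{Cap}_k(E,\Omega)$. Since $-1\le u_m\le0$ and $u_m\to-1$ a.e.\ on $E$, dominated convergence gives $\int_E|u_m|^q\to|E|$, and the argument closes.
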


In order to establish the Brezis-Merle inequality for general $k$-convex functions, we need the following comparison principle.
\begin{theorem}\cite[Theorem 3.3]{TW02}\label{comparison}
	Let $\Omega \subset \mathbb{R}^n$ be a bounded domain and let $u, v$ be $k$-convex functions such that $u = v = \phi$ continuously on $\partial \Omega$. If the $k$-Hessian measures $\mu_k[u]$ and $\mu_k[v]$ are both continuous with respect to the $k$-Hessian capacity, then
	\[
	\mu_k[u]\bigl(\{ u < v \}\bigr) \ge \mu_k[v]\bigl(\{ u < v \}\bigr).
	\]
\end{theorem}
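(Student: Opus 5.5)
This result is quoted from \cite{TW02}, but here is how I would prove it: first for smooth functions, where it reduces to a boundary flux identity, and then in general by approximation.

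\textbf{Smooth case.} Assume first that $u,v\in C^2(\overline\Omega)$ are $k$-convex. Fix $\varepsilon>0$ such that $-\varepsilon$ is a regular value of $u-v$; by Sard's theorem almost every $\varepsilon$ qualifies. Put $D_\varepsilon=\{u<v-\varepsilon\}$. Its closure lies inside $\Omega$, since $u=v$ on $\partial\Omega$, and $\partial D_\varepsilon$ is a smooth hypersurface.

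The operator $S_k$ has divergence form,
\[
S_k[w]=\tfrac1k\,\partial_i\bigl(S_k^{ij}[w]\,\partial_j w\bigr).
\]
It follows that
\[
\int_{D_\varepsilon}\bigl(S_k[u]-S_k[v]\bigr)\,dx=\int_0^1\!\!\int_{\partial D_\varepsilon} S_k^{ij}[w_t]\,\partial_j(u-v)\,\nu_i\,d\sigma\,dt, \qquad w_t=v+t(u-v).
\]
On $\partial D_\varepsilon$ the function $u-v+\varepsilon$ vanishes and is negative inside $D_\varepsilon$. Hence $D(u-v)=|D(u-v)|\,\nu$ there, with $\nu$ the outer normal. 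Each $w_t$ is $k$-convex because the $k$-convex cone is convex, so $(S_k^{ij}[w_t])$ is positive semidefinite. Therefore every boundary integrand is nonnegative, and
\[
\mu_k[u](D_\varepsilon)\ge\mu_k[v](D_\varepsilon).
\]
Letting $\varepsilon\downarrow0$ along regular values, $D_\varepsilon$ increases to $\{u<v\}$, and monotone convergence of both measures gives the inequality for smooth $u,v$.

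\textbf{General case.} Take smooth $k$-convex approximations $u_m\downarrow u$ and $v_m\downarrow v$ on subdomains exhausting $\Omega$, obtained by mollification. Since $u=v=\phi$ continuously on $\partial\Omega$, the boundary data can be matched up to a small error. Concretely, replace $v$ by $\max(v-\delta,u)$ near $\partial\Omega$, so that the relevant sets $\{u_m<v_m-\delta\}$ have compact closure in a fixed interior region. Then apply the smooth case to these sets. By the weak continuity of Hessian measures \cite{TW99}, $\mu_k[u_m]\rightharpoonup\mu_k[u]$ and $\mu_k[v_m]\rightharpoonup\mu_k[v]$.

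\textbf{Main obstacle.} The sets $\{u_m<v_m-\delta\}$ are neither open nor closed in the limit, so weak convergence alone does not pass the inequality to the sets $\{u<v-\delta\}$. I would handle this with quasicontinuity. A $k$-convex function is continuous outside an open set $G$ with $\operatorname{Cap}_k(G,\Omega)$ arbitrarily small, and by Dini's theorem the decreasing approximations converge uniformly off $G$. The hypothesis that $\mu_k[u]$ and $\mu_k[v]$ are continuous with respect to capacity makes $\mu_k[u](G)$ and $\mu_k[v](G)$ small. This continuity must also hold uniformly for the approximants, which is the delicate point; I would obtain it from capacity estimates together with the comparison $\mu_k[u_m](G)\lesssim\operatorname{Cap}_k(G)$ for functions that are bounded on the relevant region.

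Off $G$, the sets $\{u<v-\delta\}$ are sandwiched between open and closed sets determined by the uniformly convergent approximants. Lower semicontinuity on open sets and upper semicontinuity on closed sets under weak convergence then give
\[
\mu_k[u](\{u<v-\delta\})+o(1)\ge \mu_k[v](\{u<v-2\delta\})-o(1).
\]
Finally, let $\delta\to0$ to obtain the stated inequality on $\{u<v\}$.
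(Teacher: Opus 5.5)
The paper gives no proof of this statement; it quotes it from \cite{TW02}. In the paper's own use of it (the proof of Theorem~\ref{thm BM inequality}) both functions are bounded. Your smooth case is correct, and for locally bounded $u,v$ your quasicontinuity argument also goes through.

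For the statement as given, however, there is a genuine gap exactly at the step you call delicate. The estimate you propose, $\mu_k[w](E)\le(\sup_W|w|)^k\operatorname{Cap}_k(E,W)$, controls $\mu_k[u_m](G)$ uniformly in $m$ only if $u$ is bounded near $K$. In that case $\mu_k[u]$ is automatically continuous with respect to capacity, so your argument never genuinely uses the hypothesis. The hypothesis is there to admit unbounded functions. For example, take $k<n/2$ and $0<a<\frac{n-2k}{k}$, and set $u=1-|x|^{-a}$ on the unit ball $B_1$. Then $u$ is $k$-convex, vanishes on $\partial B_1$, and $S_k[u]=c\,|x|^{-k(a+2)}\in L^p(B_1)$ for some $p>1$. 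By H\"older and Theorem~\ref{iso}(i), $\mu_k[u](E)\le C\operatorname{Cap}_k(E,B_1)^{\theta}$ with $\theta=\frac{n}{n-2k}(1-\frac1p)>0$, yet $u(0)=-\infty$. For such $u$ one has $\sup_K|u_m|\to\infty$. Because $S_k$ is nonlinear for $k\ge 2$, you also cannot transfer the hypothesis from $\mu_k[u]$ to $\mu_k[u*\rho_{1/m}]$; for $k=1$ you could, since $\Delta(u*\rho)=(\Delta u)*\rho$.

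What is missing is a separate treatment of the region where the functions are very negative.
\begin{itemize}
\item Fix $M$ and a neighbourhood $W\Subset\Omega$ of $K$, and split $G\cap K\subset(G\cap\{u>-M\})\cup(K\cap\{u\le -M\})$.
\item Since $u_m\ge u$, the open set $\{u_m>-M\}$ contains $\{u>-M\}$, and on it $\mu_k[u_m]=\mu_k[\max(u_m,-M)]$. The functions $\max(u_m,-M)$ are bounded on $W$ uniformly in $m$, so $\mu_k[u_m](G\cap\{u>-M\})\le C_M\operatorname{Cap}_k(G\cap W,W)$.
\item The set $K\cap\{u\le -M\}$ is compact because $u$ is upper semicontinuous. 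Weak convergence therefore gives $\limsup_m\mu_k[u_m](K\cap\{u\le -M\})\le\mu_k[u](K\cap\{u\le -M\})$.
\item This last quantity tends to $\mu_k[u](K\cap\{u=-\infty\})=0$ as $M\to\infty$. Indeed, $\operatorname{Cap}_k(\{u<-M\}\cap W,\Omega)\to 0$ and $\mu_k[u]$ is continuous with respect to capacity. This is where the hypothesis genuinely enters.
\item Do the same for $v_m$. You need it in the lower bound, where $\mu_k[v_m](G)$ has to be subtracted from the $\mu_k[v_m]$-mass of an open set.
\end{itemize}
Choosing $M$ first, then $G$, and then letting $m\to\infty$ closes your argument.

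A minor point: Sard's theorem for a real-valued function on $\mathbb{R}^n$ requires $C^n$ regularity. So either apply your smooth case only to the $C^\infty$ mollifications, or avoid level sets altogether. For the latter, integrate $\eta(u-v)\,\partial_i\bigl(S_k^{ij}[w_t]\partial_j(u-v)\bigr)$ by parts, with $\eta$ a nonincreasing cutoff that vanishes on $[-\varepsilon,\infty)$ and equals $1$ on $(-\infty,-2\varepsilon]$.
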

\begin{remark}
	The theorem also holds if the condition ``$u = v = \phi$ continuously on $\partial\Omega$'' is replaced by $u \ge v$ on $\partial\Omega$.
\end{remark}

\begin{proof} [Proof of Theorem \ref{thm BM inequality}] 
	Define $K_{t}:=\{x\in \Omega : u(x)<-t\}$, $t>0$.  Let $u_{K_t}$ be the extremal function with respect to $\operatorname{Cap}_{k}(K_{t},\Omega)$ and let $u_j \in \Phi_0^k(\Omega)$ be a sequence converging to $u$ decreasingly. Choose $\delta>0$ such that there exists a compact set $B_{\delta}$ satisfying 
	\[\{u+\delta \le tu_{K_t}\}\subset B_{\delta}\subset \Omega.\]
	
	By Theorem \ref{comparison}, we have
	\[
	\int_{\{u_j+\delta<tu_{K_t}\}}S_k[u_j]\,dx\geq \int_{\{u_j+\delta<tu_{K_t}\}}S_k[tu_{K_{t}}]\,dx.
	\]
	Note that $u_j\ge u$. Hence
	\begin{align*}
		\limsup_j	\int_{\{u_j+\delta<tu_{K_t}\}}S_k[u_j]\,dx
		&\le \limsup_j\int_{\{u+\delta\le tu_{K_t}\}}S_k[u_j]\,dx\\
		&\le  \limsup_j\int_{B_\delta}S_k[u_j]\,dx\\
		&\le \int_{B_\delta}S_k[u]\,dx\le \int_{\Omega}S_k[u]\,dx.
	\end{align*}
	Since $u_j$ decrease and $\{u+\delta<-t\}\subset \bigcup_{j=1}^{\infty}\{u_j+\delta<-t\}$,  we get
	\begin{align*}
		\limsup_j\int_{\{u_j+\delta<tu_{K_t}\}}S_k[tu_{K_{t}}]\,dx
		&\ge  \limsup_j\int_{\{u_j+\delta<-t\}}S_k[tu_{K_{t}}]\,dx\\
		&\ge \int_{\{u+\delta<-t\}}S_k[tu_{K_{t}}]\,dx.
	\end{align*}
	Letting $\delta\to 0$, then 
	\[\int_{\{u+\delta<-t\}}S_k[tu_{K_{t}}]\,dx \to \int_{\{u<-t\}}S_k[tu_{K_{t}}]\,dx.\]
	Therefore we obtain
	\[
	\mathcal M_k(u)=\int_{\Omega}S_k[u]\,dx\geq t^{k}\int_{K_t}S_k[u_{K_{t}}]\,dx=t^{k}\operatorname{Cap}_{k}(K_{t},\Omega),\quad \forall\,t>0,
	\]
	that is,
	\begin{equation}\label{iso-cap}
		\operatorname{Cap}_k(K_t,\Omega)\leq \left(\frac{\mathcal M_k(u)^{1/k}}{t}\right)^k.
	\end{equation}

The rest of the proof is divided into two cases:

(1) Let $1\leq k<n/2$. 
By Theorem \ref{iso}(i), for  $1\leq q \leq \frac{n(k+1)}{n-2k}$, it holds
\[
|K_t| \leq C\left(\frac{\mathcal M_k(u)^{1/k}}{t}\right)^{\frac{kq}{k+1}}.
\]
Then for any $1\leq p \leq \frac{kq}{k+1}$,
\begin{align*}
	\int_{\Omega}\left(\frac{-u}{\mathcal M_k(u)^{1/k}}\right)^p\,dx
	&=\int_{\Omega}\chi_{ \{\frac{-u}{\mathcal M_k(u)^{1/k}} \leq 1\}}\cdot \left(\frac{-u}{\mathcal M_k(u)^{1/k}}\right)^p+\chi_{\{\frac{-u}{\mathcal M_k(u)^{1/k}}> 1\}} \cdot \left(\frac{-u}{\mathcal M_k(u)^{1/k}}\right)^p\,dx\\
	&\leq |\Omega|+\int_{\mathcal M_k(u)^{1/k}}^{\infty}pt^{p-1}\frac{|K_t|}{\mathcal M_k(u)^{p/k}}\,dt\\
	&\leq |\Omega|+Cp\int_{\mathcal M_k(u)^{1/k}}^{\infty}\mathcal M_k(u)^{\frac{q}{k+1}-\frac{p}{k}} t^{p-1-\frac{kq}{k+1}}\, dt\\ 
	&=|\Omega|+Cp\left(\frac{kq}{k+1}-p\right)^{-1}\cdot\mathcal M_k(u)^{\frac{q}{k+1}-\frac{p}{k}} \cdot (\mathcal M_k(u)^{1/k})^{p-\frac{kq}{k+1}}\\
	&\leq C.
\end{align*}

(2) Let $k=\frac{n}{2}$. By  Theorem \ref{iso}(ii), 
\[
|K_{t}|\leq C(n)|\Omega|\cdot\exp
\left(-\alpha_0\frac{1}{\operatorname{Cap}_{k}(K_t,\Omega)^{\frac{\beta}{k+1}}}\right)\leq C(n)|\Omega| \cdot\exp\left(-\alpha_0\frac{t^{\frac{k\beta}{k+1}}}{\mathcal M_k(u)^{\frac{\beta}{k+1}}}\right).
\]
Then for every $0<\lambda<\alpha_0$,
\begin{align*}
\int_{\Omega}\exp\left(\lambda\frac{(-u)^{\frac{k\beta}{k+1}}}{\mathcal M_k(u)^{\frac{\beta}{k+1}}}\right)\,dx
&=\sum_{j=0}^{\infty}\frac{\lambda^{j}}{j!}\int_{\Omega}\frac{(-u)^{\frac{k\beta  j}{k+1}}}{\mathcal M_k(u)^{\frac{\beta j}{k+1}}}\,dx\\
&=\sum_{j=0}^{\infty}\frac{\lambda^{j}}{j!}\int_{0}^{\infty}\frac{|K_{t}|}{\mathcal M_k(u)^{\frac{\beta j}{k+1}}}\,d(t^{\frac{k\beta j}{k+1}}) \\
&\leq C(n)|\Omega|\sum_{j=0}^{\infty}\frac{\lambda^{j}}{j!}\int_{0}^{\infty}\exp\left(-\alpha_0\frac{t^{\frac{k\beta}{k+1}}}{\mathcal M_k(u)^{\frac{\beta}{k+1}}}\right)\,d\left(\frac{t^{\frac{k\beta j}{k+1}}}{\mathcal M_k(u)^{\frac{\beta j}{k+1}}}\right) \\
&\leq C(n)|\Omega|\sum_{j=0}^{\infty}\frac{\lambda^{j}}{\alpha_0^{j}}=\frac{\alpha_0C(n)}{\alpha_0-\lambda}|\Omega|.
\end{align*}
\end{proof}

The following local Brezis-Merle type inequality will be used later in Section \ref{con-com}.
\begin{cor}\label{loc version of BM inequality}
	Suppose $u\in \mathcal F^k(\Omega)$ with $k=\frac{n}{2}$. Let $x_0\in \Omega$ satisfying $\mu_k[u](\{x_0\})<1$, then there is some $c>0$ such that $e^{-(\alpha_0+c) u} $ locally integrable near $x_0$.
\end{cor}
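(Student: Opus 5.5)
The plan is to localize $u$ near $x_0$, so that the new function has $k$-Hessian mass strictly less than one, and then apply Theorem~\ref{thm BM inequality}(2) with $\beta=1$ on a small ball. With $\beta=1$ the exponent $\frac{k\beta}{k+1}$ equals $\frac{k}{k+1}$, which is awkward. The cleaner route is to redo the level-set argument from the proof of Theorem~\ref{thm BM inequality} using the endpoint $\beta=\beta_0=1+\frac1k$. Then $\frac{k\beta}{k+1}=1$ and $\frac{\beta}{k+1}=\frac1k$, so the estimate reads
\[
\int_{B}\exp\Bigl(\lambda\frac{-v}{\mathcal M_k(v)^{1/k}}\Bigr)\,dx\le \frac{\alpha_0C(n)}{\alpha_0-\lambda}|B|,\qquad 0<\lambda<\alpha_0,
\]
for $v\in\mathcal F^k(B)$. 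If $\mathcal M_k(v)<1$, we may take $\lambda$ close to $\alpha_0$ so that $\lambda/\mathcal M_k(v)^{1/k}>\alpha_0+c$ for some $c>0$. This gives integrability of $e^{-(\alpha_0+c)v}$ on $B$.

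Construction of $v$. Since $\mu_k[u](\{x_0\})<1$ and $\mu_k[u]$ is a finite Radon measure, choose $r>0$ with $B=B_{2r}(x_0)\Subset\Omega$ and $\mu_k[u](B_{2r}(x_0))<1-\eta$ for some $\eta>0$. Let $v$ be the $k$-convex solution of the Dirichlet problem $\mu_k[v]=\mu_k[u]\lfloor B$ in $B$ with $v=0$ on $\partial B$. This solution exists by the Trudinger--Wang theory; alternatively, approximate $u$ by smooth $u_j\downarrow u$ and solve $S_k[v_j]=S_k[u_j]$ in $B$, $v_j=0$ on $\partial B$, then pass to the limit using weak continuity. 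The balls are uniformly convex, so they are $(k-1)$-convex.

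Comparison. Set $w=u-\sup_{\partial B}u$. Since $u\le0$, we have $w\ge v$ on $\partial B$ in the sense that $w\ge0=v$ there, and the two functions have the same Hessian measure in $B$. The comparison principle (Theorem~\ref{comparison} with the Remark, applied to the smooth approximants and then passed to the limit) should give $w\ge v$ in $B$, i.e.\ $u\ge v+\sup_{\partial B}u$. We need $\sup_{\partial B}u>-\infty$; this holds for a.e.\ radius, or for suitable $r$, since a $k$-convex function is subharmonic and bounded above, and its sup on a sphere is finite unless $u\equiv-\infty$. Hence $e^{-(\alpha_0+c)u}\le C e^{-(\alpha_0+c)v}$ on $B$, and local integrability near $x_0$ follows from the exponential estimate applied to $v$, because $\mathcal M_k(v)=\mu_k[u](B)<1-\eta$.

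The main obstacle is the rigorous construction of $v$ with the precise mass and a valid comparison when $\mu_k[u]$ may fail to be continuous with respect to capacity, for instance when it has atoms. I would handle this with the smooth approximation $u_j$: the estimate for each $v_j$ is uniform, with $\mathcal M_k(v_j)\le 1-\eta/2$ for large $j$ because the masses on $\overline{B}$ converge by weak continuity. Pointwise comparison holds for the smooth $u_j,v_j$. Then Fatou's lemma passes $\int_B e^{-(\alpha_0+c)(u_j-\sup_{\partial B}u_j)}\le C$ to the limit; taking a slightly smaller ball absorbs the boundary issues.
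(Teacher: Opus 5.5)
\textbf{The comparison step has a genuine gap.} Since $w=u-\sup_{\partial B}u\le 0$ on $\partial B$, your claim that $w\ge 0=v$ on $\partial B$ is false. Indeed, $u\ge v+\sup_{\partial B}u$ in $B$ would force $u\equiv\sup_{\partial B}u$ on $\partial B$. With equal Hessian measures, comparison only gives $v\le u-m$ in $B$ for a constant $m\le\inf_{\partial B}u$. So what you actually need is $\inf_{\partial B}u>-\infty$, a \emph{lower} bound for $u$ on the sphere.

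Finiteness of $\sup_{\partial B}u$, or of the spherical means, says nothing about this lower bound. For singular $u$, which is the only interesting case, poles of $u$ may lie on $\partial B$ or accumulate there. Your smooth-approximation fallback does not repair this. For continuous $u_j\downarrow u$ one has $\min_{\partial B}u_j\downarrow\inf_{\partial B}u$, so the correctly signed bound $\int_B e^{-(\alpha_0+c)u_j}\le C\,e^{-(\alpha_0+c)\min_{\partial B}u_j}$ may degenerate, and Fatou's lemma yields nothing. Shrinking the ball does not change this.

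To make your route work, you would have to show that there are arbitrarily small $r$ with $\inf_{\partial B_r(x_0)}u>-\infty$. For $n=2$ this follows from the Riesz decomposition and Fubini. For $k=n/2\ge 2$ there is no linear representation, so a separate nonlinear argument would be needed, and the proposal gives none.

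\textbf{How the paper avoids this.} The paper needs no boundary lower bound, because it uses a balayage construction on all of $\Omega$. Set $u_j=\sup\{v\in\mathcal F^k(\Omega): v\le u \text{ on } B_{1/j}(x_0)\}$. This function coincides with $u$ on $B_{1/j}(x_0)$, satisfies $u_j\ge u$, vanishes on $\partial\Omega$, and has Hessian measure concentrated near $x_0$. Its total mass is controlled by testing against the bounded $k$-convex function $h=\max\{\delta G(\cdot,x_0),-1\}$, which equals $-1$ on the support of $\mu_k[u_j]$ for large $j$. Lemma~\ref{hessian comparison} then gives $\mathcal M_k(u_j)=\int_\Omega(-h)S_k[u_j]\le\int_\Omega(-h)S_k[u]<1$ once $\delta$ is small; this is where $\mu_k[u](\{x_0\})<1$ enters. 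Theorem~\ref{thm BM inequality}(2) with $\beta=\beta_0$ is then applied to $u_j$ on $\Omega$, exactly as in your last step.

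The other parts of your plan are fine: choosing the ball from $\mu_k[u](\{x_0\})<1$, mass control on $\overline B$ via weak continuity, and the choice $\beta=\beta_0$. What is missing is a localization that does not require $u$ to be bounded below on $\partial B$.
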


The proof needs a monotonicity formula for the non-commutative
\[\langle u, v\rangle:=\int_{\Omega} (-v) S_k[u].\]
Since we didn't find it in the literature, we present a proof here.

\begin{lemma}\label{hessian comparison}
	Let $u, v \in \mathcal{F}^k(\Omega)$ and $u \ge v$. Then for $h \in \mathcal{F}^k(\Omega)\cap L^\infty(\Omega)$, it holds
	$$\int_{\Omega} (-h) S_k[u] \le \int_{\Omega} (-h) S_k[v].$$
\end{lemma}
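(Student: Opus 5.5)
The idea is to move the $k$-Hessian measures onto the test function $h$ by integrating by parts, in the form of a mixed-Hessian (polarization) identity. Then $u\ge v$ enters through the monotonicity of the mixed measures.

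\emph{Step 1 (smooth case).} Take first $u,v\in\Phi^k_0(\Omega)$ and $h$ smooth, $k$-convex, vanishing on $\partial\Omega$. Write $S_k[u]=\frac1k\sum S_k^{ij}[u]u_{ij}$, where $S_k^{ij}[u]$ is divergence free. Integrating by parts twice, using that all functions vanish on $\partial\Omega$, gives the symmetry relation
\[
\int_\Omega (-h)\,S_k[u]\,dx=\int_\Omega (-u)\,S_k(h,u,\dots,u)\,dx,
\]
where $S_k(h,u,\dots,u)$ is the mixed $k$-Hessian operator (the polarization of $S_k$). Iterating this, swapping one copy of $u$ at a time, yields
\[
\int_\Omega (-h)S_k[u]=\int_\Omega (-u)S_k(h,u,\dots,u)\le\cdots,
\]
and I want to compare with $v$ step by step. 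Concretely, set $w_t=v+t(u-v)$, which is not $k$-convex in general. So instead I replace one argument at a time:
\[
\int(-h)S_k(u,\dots,u)-\int(-h)S_k(v,u,\dots,u)=\int (v-u)\,S_k(h,u,\dots,u)\le 0 .
\]
Here I use symmetry of $\int(-a)S_k(b,c,\dots)$ in $a,b$ for functions vanishing on the boundary, together with the fact that the mixed measure $S_k(h,u,\dots,u)\ge0$ by G\aa rding's inequality, since all arguments lie in the Gårding cone $\Gamma_k$. Doing this for each of the $k$ slots gives $\int(-h)S_k[u]\le\int(-h)S_k[v]$. Each step uses only mixed operators with $k$-convex entries, so nonnegativity holds throughout.

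\emph{Step 2 (approximation).} For general $u,v\in\mathcal F^k(\Omega)$ and bounded $h$, approximate by smooth $k$-convex functions decreasing to them, vanishing on the boundary, for instance solutions of Dirichlet problems on slightly smaller domains, or the sequences $u_j$ used in the proof of Theorem~\ref{thm BM inequality}. The ordering $u\ge v$ must be preserved. I would achieve this by replacing $v_j$ with $\min$-type adjustments, or more cleanly by using $\max(u_j,v_j)$, which is $k$-convex and again decreases to $\max(u,v)=u$. It is nonsmooth, but Step 1 can be run with $h$ smooth and $u,v$ merely continuous, via the weak formulation of mixed measures. Then pass to the limit. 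Since $h$ is bounded and continuous up to the boundary in the relevant sense, the weak continuity of Hessian measures \cite{TW99} gives $\int(-h)S_k[u_j]\to\int(-h)S_k[u]$. Mass escaping to $\partial\Omega$ is harmless because $h\to0$ there. Finally approximate $h$ from above by smooth $h_m$ and use monotone convergence.

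\emph{Main obstacle.} The delicate point is the limit passage with unbounded $u,v$. Weak convergence only tests against compactly supported continuous functions, while $h$ is merely bounded and vanishing at $\partial\Omega$. I would handle this by truncation. Replace $h$ by $\max(h,-\varepsilon^{-1}\cdot)$ is not needed since $h$ is bounded; instead cut off near the boundary using $\max(h,-\varepsilon)+\varepsilon$, which has compact support. Since $\mathcal M_k(u)<\infty$, the error is at most $\varepsilon\,\mathcal M_k$, uniformly provided the masses $\mathcal M_k(u_j)$ stay bounded, which the comparison principle (Theorem~\ref{comparison}) gives. Upper semicontinuity of the truncated $h$ requires the inequality to be taken in the right direction, via lower semicontinuity of $\int(-h)\,d\mu$ in $\mu$ for $-h$ lower semicontinuous and nonnegative.
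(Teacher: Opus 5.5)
Your Step 1 is exactly the paper's argument: the symmetry $\int_\Omega(-a)\,S_k(b,c_2,\dots,c_k)=\int_\Omega(-b)\,S_k(a,c_2,\dots,c_k)$ for functions vanishing on $\partial\Omega$, together with nonnegativity of mixed Hessians of $k$-convex functions, lets you trade one copy of $u$ for $v$ at a time. Your Step 2 is a more detailed version of the paper's one-line appeal to approximation via Trudinger--Wang, and it also addresses preserving $u\ge v$ via $\max(u_j,v_j)$, the mere upper semicontinuity of $h$, and mass near $\partial\Omega$.

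There are two harmless slips. First, $w_t=v+t(u-v)$ is in fact $k$-convex for $t\in[0,1]$, since the $k$-convex functions form a convex cone. Second, the compactly supported part of your cutoff should be $\min(h+\varepsilon,0)$ rather than $\max(h,-\varepsilon)+\varepsilon$, which tends to $\varepsilon$ at $\partial\Omega$.
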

\begin{proof}
	Firstly we assume $u, v, h  \in \Phi_{0}^k(\Omega)$. 
	Let  
	\[S_k[u_1,..., u_k]=\frac{1}{k!}\sum_{ i_1\neq...\neq i_k}\lambda_{i_1}^1\cdots\lambda_{i_k}^k\]
	be the mixed Hessian, where $\lambda^i_1,...,\lambda^i_n$ are the eigenvalues of $D^2 u_i$. Using integration by parts, we have
	\begin{eqnarray*}
		\int_{\Omega} (-h) S_k[u]&=&\int_{\Omega} (-h) S_k[u, u, \dots, u]
		=\int_{\Omega} (-u) S_k[h, u, \dots, u]\\
		&\leq& \int_{\Omega} (-v) S_k[h, u, \dots, u]
		=\int_{\Omega} (-u) S_k[h, v, u , \dots, u]\\
		&\leq& \int_{\Omega} (-v) S_k[h, v, u, \dots, u]\\
		& \le& \dots \le \int_{\Omega} (-v) S_k[h, v, \dots, v] = \int_{\Omega} (-h) S_k[v].
	\end{eqnarray*}
	When $u,v, h$ are general $k$-convex functions, the monotonicity can be obtained by approximation with Lemma 2.5 in \cite{TW02}.
\end{proof}

\begin{proof}[Proof of Corollary \ref{loc version of BM inequality}]
	Let $B_j=B_{\frac{1}{j}}(x_0)$ and
	\[
	u_j := \sup \{ v \in \mathcal{F}^k(\Omega) : v \leq u \text{ on } B_j \}.
	\]
	Then $u_j \in \mathcal{F}^k(\Omega)$, $u_j \geq u$ , $u_j = u$ on $B_j$ and $\operatorname{supp}S_k [u_j]\subset B_{j-1}$. 
	
	Denote by $G(x,x_0)$ the Green function for $\Omega$ with logarithmic pole at $x_0$ and
	choose $\delta > 0$ small enough such that
	\[
	\int_{\Omega} (-\max\{\delta G(x,x_0), -1\}) S_k [u]< 1.
	\]
	By Lemma \ref{hessian comparison}, we have
	\[
	\int_{\Omega} (-\max\{\delta G(x,x_0), -1\}) S_k [u_j]
	\leq \int_{\Omega} (-\max\{\delta G(x,x_0), -1\}) S_k [u]< 1.
	\]
	If we choose $j$ such that $B_{j-1}\subset \{x: \delta G(x,x_0) < -1\}$, it follows that
	\[
	\int_{\Omega} S_k[u_j] = \int_{B_{j-1}} S_k[u_j]< 1.
	\]
	By the Brezis-Merle inequality, $\forall \alpha<\alpha_0$,
	\begin{eqnarray*}
		\int_{\Omega} \exp\left(\alpha\left(\frac{-u_j}{\mathcal M_k(u_j)^{\frac{1}{k}}}\right)\right)\,dx\leq C.
	\end{eqnarray*}
	Since $\mathcal M_k(u_j)<1$, the conclusion holds by choosing $\alpha$ sufficiently close to $\alpha_0$.
\end{proof}

\vskip 20pt

\section{An ABP type estimate}\label{ABP-pf}
The proof is inspired by a recent iterative method using an auxiliary Monge--Amp\`ere equation in the study of the complex
Monge--Amp\`ere equation \cites{GFT, Liu24}.

\begin{proof}[Proof of Theorem \ref{thm L infty estimate}]
Without loss of generality, we can assume $\sup_{\partial \Omega} u = 0$. 
Let $\{e^{G_i}\}$ be a sequence of smooth positive functions approximating $\chi_{\Gamma^-} S_k[-u]$ from above. Define
\[
N_i := \int_{\Omega} e^{G_i} \Phi(G_i) \,dx.
\]
Then $N_i \to \int_{\Gamma^-} S_k[-u] \Phi(\log(S_k[-u])) \,dx$ as $i \to \infty$.

Consider the auxiliary $k$-Hessian equation for $\psi_1$,
\[
\begin{cases}
S_k[\psi_1] = \frac{e^{G_i} \Phi(G_i)}{N_i} & \text{in } \Omega, \\
\psi_1 = 0 & \text{on } \partial \Omega.
\end{cases}
\]
By Theorem \ref{thm BM inequality}, there exists a constant $\alpha(n) > 0$  such that
\begin{align}\label{upper bd of exp-psi1}
\int_{\Omega} \exp(-\alpha(n) \psi_1) \,dx \le C(n)|\Omega|.
\end{align}

Let $q > 1$ and let $\alpha > 0$ be the constant from the inequality above. Define $h: \mathbb{R} \to \mathbb{R}$ as
\[h(s) = -\int_s^\infty \frac{q}{\alpha} \frac{N_i^{1/k}}{\Phi^{1/k}(t)} \,dt.\]
Since $\Phi$ is increasing, $h$ is a concave, increasing function.
Define the function $\psi = -h(-\frac{\alpha}{q} \psi_1)$. 
A direct computation shows
\[D^2\psi = \frac{\alpha}{q} h'(-\frac{\alpha}{q}\psi_1) D^2\psi_1-\frac{\alpha^2}{q^2}h'' D\psi_1\otimes D\psi_1.\]
We can get
\[
	S_k[\psi] \ge \left(\frac{\alpha}{q} h'(-\frac{\alpha}{q}\psi_1)\right)^k S_k[\psi_1] 
	= \left(\frac{\alpha}{q}\right)^k (h')^k \frac{e^{G_i} \Phi(G_i)}{N_i}.
\]
By the definition of $h$, we have $(h'(s))^k= \frac{q^k}{\alpha^k} \frac{N_i}{\Phi(s)}$. So
\[S_k[\psi] \ge  \frac{e^{G_i} \Phi(G_i)}{\Phi(-\frac{\alpha}{q}\psi_1)}.\]
If $G_i > -\frac{\alpha}{q}\psi_1$ , then $\Phi(G_i)>\Phi(-\frac{\alpha}{q}\psi_1)$, thus $e^{G_i}  \le S_k[\psi]$. If $G_i \le -\frac{\alpha}{q}\psi_1$ , $e^{G_i} \le \exp(-\frac{\alpha}{q}\psi_1)$.
In summary, we get
\begin{align}\label{G_k inequality}
e^{G_i} \,dx \le S_k[\psi] + F \,dx
\quad \text{with} \quad
F = \min\{\exp\{-\frac{\alpha}{q}\psi_1\}, e^{G_i}\}.
\end{align}

We consider the auxiliary $k$-Hessian equation
\[
\begin{cases}
S_k[\psi_{s,j}] = \frac{\eta_j(u+\psi-s) F}{A_{s,j}} & \text{in } \Omega, \\
	\psi_{s,j} = 0 & \text{on } \partial \Omega,
\end{cases}
\]
where 
\[A_{s,j} = \int_{\Omega} \eta_j(u+\psi-s) F \,dx\] 
and $\eta_j $ is a sequence of smooth positive functions approximating $\max\{x,0\}$ from above uniformly.

Consider the function $\Psi = u + \psi - s - \epsilon (-\psi_{s,j})^{\frac{k}{k+1}}$, where 
\[\epsilon=\left(\frac{k+1}{k}\right)^{\frac{k}{k+1}}A_{s,j}^{\frac{1}{k+1}}.\] 
Let $s_0=-h(0)\leq q\alpha^{-1}N_i^{1/k}\Lambda$. We claim that $\Psi\le 0$ for $s\ge s_0$. On the boundary, we have $\Psi\le 0$ when $s\ge -h(0)$. Without loss of generality, we assume that the maximum of $\Psi$ is attained at $x_0 \in \Omega_s = \{ x \in B_1 \mid u(x) + \psi(x) - s > 0 \}$. At $x_0$, we have
\begin{align*}
0&\ge D^2u+D^2\psi+\epsilon\frac{k}{k+1}(-\psi_{s,j})^{\frac{-1}{k+1}}D^2\psi_{s,j}.
\end{align*}
Then we get  
\[S_k[-u]\geq S_k[\psi]+\left(\epsilon\frac{k}{k+1}\right)^k(-\psi_{s,j})^{\frac{-k}{k+1}}_iS_k[\psi_{s,j}].\]
Using the inequality (\ref{G_k inequality}), we have
\begin{align*}
S_k[\psi] +F&\geq e^{G_i}\ge S_k[-u]\\
&\geq  S_k[\psi]+\left(\epsilon\frac{k}{k+1}\right)^k(-\psi_{s,j})^{\frac{-k}{k+1}}S_k[\psi_{s,j}].
\end{align*}
By the definition of $\psi_{s,j}$, we get
\[\eta_j(u+\psi-s)\le A_{s,j}\left(\epsilon\frac{k}{k+1}\right)^{-k}(-\psi_{s,j})^{\frac{k}{k+1}}.\]
From the definition of $\epsilon$, it follows $\Psi\leq 0$.

Let $\phi(s) := \int_{\Omega_s} F \,dx$, where $\Omega_s = \{u + \psi > s\}$ and $F = \min(e^{-\frac{\alpha}{q}\psi_1}, e^{G_i})$.
Let 
\[A_s := \int_{\Omega_s} (u + \psi - s) F \,dx.\]
For $t > 0$, we have $(u+\psi-s) \ge t$ on $\Omega_{s+t}$ . Thus,
\[
t \cdot \phi(s+t) = t \int_{\Omega_{s+t}} F \,dx \le \int_{\Omega_{s+t}} (u+\psi-s) F \,dx \le \int_{\Omega_s} (u+\psi-s) F \,dx = A_s.
\]
For $s\ge s_0, \Psi \le 0$, we have $u+\psi-s \le \epsilon(-\psi_{s,j})^{\frac{k}{k+1}}$. Then 
\begin{eqnarray*}
A_s &=& \int_{\Omega_s} (u+\psi-s) F \,dx \\
&\le& \epsilon \int_{\Omega_s} (-\psi_{s,j})^{\frac{k}{k+1}} F \,dx\\
&\le& \epsilon \left( \int_{\Omega_s} (-\psi_{s,j})^{\frac{kp}{k+1}} F \,dx \right)^{1/p} \left( \int_{\Omega_s} F \,dx \right)^{1-1/p}.
\end{eqnarray*}
Letting $j\to \infty$, we have $A_{s,j}\to A_s$. Note that $\epsilon=c(n)A_{s,j}^{\frac{1}{k+1}}\to c(n)A_{s}^{\frac{1}{k+1}}$, then we get
\[
A_s 
\le c(n) \left( \int_{\Omega_s} (-\psi_{s,j})^{\frac{kp}{k+1}} F \,dx \right)^{\frac{k+1}{kp}} \left( \int_{\Omega_s} F \,dx \right)^{(1-1/p)\frac{k+1}{k}}.
\]
For $p>k+1$, let $\delta=(1-1/p)\frac{k+1}{k}-1>0$.
By \eqref{upper bd of exp-psi1}, we have
\[\int_{\Omega_s} (-\psi_{s,j})^{\frac{kp}{k+1}} F \,dx\leq \left(\int_{\Omega}(-\psi_{s,j})^{\frac{kpq^{*}}{k+1}}\,dx\right)^{1/q^{*}}||F||_{L^q}\le c(n, |\Omega|).\]
Here $q^*$ sasisfies $1/q^{*}+1/q=1$.
Hence
\[A_s \le C(n) \cdot \phi(s)^{1+\delta}.\]
This gives 
\[t \phi(s+t) \le C \phi(s)^{1+\delta}, \quad \forall s\ge s_0\; \text{and} \; t>0.\]
The following is a classic lemma due to De Giorgi \cite{De57}, which was also used in \cites{Ko98, EGZ, GFT}.
\begin{lemma}[De Giorgi's lemma]
	Let $\phi : \mathbb{R} \to \mathbb{R}$ be a monotone decreasing function such that for some $\delta > 0$ and any $s \ge s_0$, $t > 0$,
	\[
	t\,\phi(s+t) \le C_0\,\phi(s)^{1+\delta}.
	\]
	Then $\phi(s) = 0$ for any $s \ge \dfrac{2C_0\,\phi(s_0)^{\delta}}{1 - 2^{-\delta}} + s_0$.
\end{lemma}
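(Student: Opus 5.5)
The plan is the standard De Giorgi iteration: build an increasing sequence of levels $s_j$ along which $\phi$ halves geometrically, and check that the levels converge to a finite limit. Fix $s_0$ and set $\phi_0=\phi(s_0)$. If $\phi_0=0$ there is nothing to prove, because $\phi$ is monotone decreasing and nonnegative (it is the integral of $F$ over a shrinking family of sets). So assume $\phi_0>0$. Define
\[
s_{j+1}=s_j+t_j,\qquad t_j:=2^{j\delta}\,\phi(s_0)^{\delta}\,2C_0 .
\]
The first target is the bound $\phi(s_j)\le 2^{-j}\phi(s_0)$, which we prove by induction on $j$.

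For the induction step, apply the hypothesis at $s=s_j\ge s_0$ with $t=t_j$:
\[
\phi(s_{j+1})\le \frac{C_0\,\phi(s_j)^{1+\delta}}{t_j}\le \frac{C_0\,2^{-j(1+\delta)}\phi(s_0)^{1+\delta}}{2C_0\,2^{j\delta}\phi(s_0)^{\delta}}=2^{-j-1}\cdot 2^{-2j\delta}\,\phi(s_0)\le 2^{-(j+1)}\phi(s_0).
\]
The induction already closes with room to spare. A more economical choice would be $t_j=2C_0\,2^{j\delta}\phi(s_0)^\delta$, adjusted so that the exponent comes out exactly to $2^{-(j+1)}$. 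In fact the natural choice is $t_j=2C_0\,\phi(s_j)^{\delta}$. With that choice, the hypothesis gives $\phi(s_{j+1})\le \phi(s_j)/2$ directly, so $\phi(s_j)\le 2^{-j}\phi(s_0)$ without any separate induction. It then gives $t_j\le 2C_0\,2^{-j\delta}\phi(s_0)^\delta$. I will use this version, taking care that $t_j>0$. If some $\phi(s_j)=0$, we stop there.

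Summing the steps gives
\[
s_\infty:=\lim s_j\le s_0+\sum_{j\ge0}2C_0\,2^{-j\delta}\phi(s_0)^\delta=s_0+\frac{2C_0\,\phi(s_0)^\delta}{1-2^{-\delta}},
\]
while $\phi(s_j)\to0$. Since $\phi$ is decreasing, $\phi(s)\le\phi(s_j)$ for every $s\ge s_\infty$ and every $j$. Hence $\phi(s)=0$ for every $s$ beyond the stated threshold.

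The only subtle point is the degenerate case in which some $t_j=0$, that is, $\phi(s_j)=0$ at a finite stage. Then monotonicity gives the conclusion immediately, since $s_j$ is already at most the threshold. No other obstacle is expected.
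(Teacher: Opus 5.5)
Your final argument is correct and is the standard De Giorgi iteration that produces exactly the stated threshold: you choose $t_j=2C_0\,\phi(s_j)^{\delta}$, so that $\phi(s_{j+1})\le \tfrac12\phi(s_j)$ and hence $t_j\le 2C_0\,2^{-j\delta}\phi(s_0)^{\delta}$, and then sum the geometric series. The paper gives no proof of its own and simply quotes the lemma, and your use of $\phi\ge 0$ on $[s_0,\infty)$ and $C_0>0$ is implicit in the statement anyway. You should delete the two preliminary choices of $t_j$, though: as written, $t_j=2C_0\,2^{j\delta}\phi(s_0)^{\delta}$ grows geometrically, so $\sum_j t_j$ diverges and could never give a finite threshold (the exponent should be $-j\delta$).
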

Applying De Giorgi's lemma, we obtain $\phi(s) = 0$ for all $s \ge s_\infty$, where
\[
s_\infty = \frac{2 C \phi(s_0)^\delta}{1-2^{-\delta}} + s_0.
\]
This means $u+\psi \le s_\infty$, then
\[\sup_{\Omega} u \le \sup_{\Omega} (u+\psi) \le s_\infty.\]
Note that
\[s_0 = -h(0) = \int_0^\infty \frac{q}{\alpha} \frac{N_i^{1/k}}{\Phi^{1/k}(t)} \,dt = c_2(n,\Lambda) N_i^{1/k}\]
and
\[\phi(s_0) \le \int_{\Omega} F \,dx \le \int_{\Omega} e^{-\frac{\alpha}{q}\psi_1} \,dx \le c_1(n, |\Omega|).\]
Hence, we obtain 
\[s_\infty \le c_1 + c_2 N_i^{1/k}.\]
Letting $i \to \infty$, we have
\[
\sup_{\Omega} u \le c_1+ c_2 \left( \int_{\Gamma^-} S_k[-u] \Phi(\log(S_k[-u])) \,dx \right)^{1/k}.
\]
This completes the proof.
\end{proof}

\vskip 20pt

\section{The mean field type equation and functionals}\label{mean-field eq}

In the following two sections we apply the inequalities and estimates in former sections to 
$k$-Hessian equation with $k=\frac{n}{2}$. We first introduce the associated functionals for the mean field equation and prove some basic properties. The framework is similar to the complex Monge-Amp\`ere equation \cite{BB}.

\begin{lemma}\label{increase of G}
$G_{\gamma_1}(u)\le G_{\gamma_2}(u)$ for $0 <\gamma_1 < \gamma_2$.
\end{lemma}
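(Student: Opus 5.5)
The second term of $G_\gamma(u)$ does not depend on $\gamma$, so the claim reduces to the monotonicity of
\[
\gamma\mapsto \frac{1}{\gamma}\log\Bigl(\frac{1}{|\Omega|}\int_\Omega e^{-\gamma u}\,dx\Bigr),
\]
which is a power-mean (Jensen/H\"older) statement. Let $d\mu=dx/|\Omega|$ denote the normalized Lebesgue measure on $\Omega$. The quantity above is $\log\|e^{-u}\|_{L^\gamma(\mu)}$, the logarithm of the $L^\gamma$ norm of the positive function $e^{-u}$ with respect to a probability measure. Such norms are nondecreasing in the exponent.

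For $0<\gamma_1<\gamma_2$, set $r=\gamma_2/\gamma_1>1$. Since $t\mapsto t^{r}$ is convex on $[0,\infty)$, Jensen's inequality (or H\"older's inequality with exponents $r$ and $r/(r-1)$ against the constant $1$) gives
\[
\int_\Omega e^{-\gamma_1 u}\,d\mu\le\Bigl(\int_\Omega \bigl(e^{-\gamma_1 u}\bigr)^{r}\,d\mu\Bigr)^{1/r}
=\Bigl(\int_\Omega e^{-\gamma_2 u}\,d\mu\Bigr)^{\gamma_1/\gamma_2}.
\]
Taking logarithms and dividing by $\gamma_1>0$ yields
\[
\frac{1}{\gamma_1}\log\int_\Omega e^{-\gamma_1 u}\,d\mu\le \frac{1}{\gamma_2}\log\int_\Omega e^{-\gamma_2 u}\,d\mu.
\]
Adding $\frac{1}{k+1}\int_\Omega u\,S_k[u]\,dx$ to both sides gives $G_{\gamma_1}(u)\le G_{\gamma_2}(u)$.

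The only point needing care is finiteness. If $\int_\Omega e^{-\gamma_2 u}\,dx=+\infty$, then $G_{\gamma_2}(u)=+\infty$ and there is nothing to prove. Otherwise the Jensen step is legitimate and shows that $\int_\Omega e^{-\gamma_1 u}\,dx$ is finite as well. Both integrals are positive because $e^{-\gamma u}>0$, so the logarithms are well defined. For $u\in\Phi_0^k(\Omega)$ one has $u\le 0$ and $\int_\Omega u\,S_k[u]\,dx$ finite, so the common term causes no ambiguity. There is no real obstacle here: the key step is simply recognizing the power-mean monotonicity.
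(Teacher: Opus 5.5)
Your proof is correct and follows essentially the same route as the paper: H\"older's inequality (equivalently, Jensen for $t\mapsto t^{\gamma_2/\gamma_1}$) with respect to the normalized Lebesgue measure, followed by taking logarithms and dividing by $\gamma_1$. Your added remarks on the case $\int_\Omega e^{-\gamma_2 u}\,dx=+\infty$ and on positivity of the integrals are a harmless refinement the paper leaves implicit.
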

\begin{proof}
 Let $\theta = \frac{\gamma_1}{\gamma_2} \in (0,1)$. 
	By H\"older's inequality  we obtain
	\[
	\int_\Omega e^{-\gamma_1 u}\, dx
	\le \left( \int_\Omega e^{-\gamma_2 u}\, dx \right)^\theta \left( \int_\Omega 1\, dx \right)^{1-\theta}.
	\]
	Dividing both sides by $|\Omega|$ yields
	\[
	\frac{1}{|\Omega|} \int_\Omega e^{-\gamma_1 u}\, dx  \le \left( \frac{1}{|\Omega|} \int_\Omega e^{-\gamma_2 u}\,dx  \right)^\theta.
	\]
	Taking the logarithm we obtain
	\[
	\frac{1}{\gamma_1} \log\left( \frac{1}{|\Omega|} \int_\Omega e^{-\gamma_1 u}\,dx  \right) \le \frac{1}{\gamma_2} \log\left( \frac{1}{|\Omega|} \int_\Omega e^{-\gamma_2 u}\,dx  \right).
	\]
Thus we get $G_{\gamma_1}(u)\le G_{\gamma_2}(u)$. 
\end{proof}

Given a non-negative measure $\mu$, following \cite{BB}, we define
\begin{align*}
	D(\mu):=\int_{\Omega} \log (\mu/dx) \,\mu
\end{align*} 
if $\mu$ is absolutely continuous with respect to the Lebesgue measure~$dx$ (with density $\mu/dx$) and otherwise $D(\mu)=\infty$. We also set
\begin{align*}
\mathcal{L}_{\gamma}(u):=-\frac{1}{\gamma}\log \int_{\Omega}e^{-\gamma u}dx.
\end{align*}
According to \cite{Ber13}, the quantity $D(\mu)/\gamma$ can be realized as a Legendre-type transform of $\mathcal{L}_{\gamma}$, which yields the dual variational formulas
\begin{align}
\frac{	D(\mu)}{\gamma}&=\sup_{u\in C^0(\bar{\Omega})}\left\{-\frac{1}{\gamma}\log \int_{\Omega}e^{-\gamma u}-\int_\Omega u\,\mu\right\}, \label{D functional}\\[5pt]
\mathcal{L}_{\gamma}(u)&=\inf_{\mu} \left\{\frac{	D(\mu)}{\gamma}+\int_\Omega u\,\mu\right\}. \label{L functional}
\end{align}
The supremum in \eqref{D functional} and the infimum in \eqref{L functional} are attained when
$\displaystyle \mu = \frac{e^{-\gamma u}}{\int_{\Omega} e^{-\gamma u}\,dx}\,dx$.

Assume there is $u_{\mu}\in \Phi_0^k(\Omega)$ such that $S_k[u_{\mu}]\,dx=\mu$. Then we can define the free energy functional 
\[F_{\gamma}(\mu):=-\frac{k}{k+1}\int_\Omega u_{\mu}\,\mu-\frac{D(\mu)}{\gamma}.\]
If $S_k[u]=\frac{e^{-\gamma u}}{\int_{\Omega} e^{-\gamma u}\,dx}\,dx$ , we have
\begin{align}\label{F G eq}
G_{\gamma}(u)= F_{\gamma}\left(\frac{e^{-\gamma u}\,dx}{\int_\Omega e^{-\gamma u}\,dx}\right)-\frac{1}{\gamma}\log |\Omega|.
\end{align}

\vskip 20pt

\section{Concentration-compactness principle}\label{con-com}

In this section, we prove the concentration-compactness principle for the 
$k$-Hessian equation of mean field type. 

We need the Brezis-Merle type inequality as well as the following Moser--Trudinger type inequality for the 
$k$-Hessian equations.

\begin{theorem}[\cite{Wan94, Wan09}]\label{thm MT type inequality} Let $u \in \Phi_0^k(\Omega)$.
	\begin{enumerate}
		\item [(1)]  If $1\leq k<\frac{n}{2}$, $\forall p+1\in [1,k^*]$, we have
		\[\|u\|_{L^{p+1}(\Omega)}\leq C\|u\|_{\Phi_0^k(\Omega)},\]
		where $C$ depends only on $n$, $k$, $p$, and $|\Omega|$, $k^*=\frac{n(k+1)}{n-2k}$.
		
		\item [(2)] Let $k = \frac{n}{2}, 0<\alpha\le \alpha_0, 1\le \beta\le \beta_0$. Then
		\begin{align}\label{MT type inequality}
			\sup \left\{ \int_{\Omega} \exp \left( \alpha \left( \frac{-u}{\|u\|_{\Phi_0^k(\Omega)}} \right)^\beta \right) \,dx : u \in \Phi_0^k(\Omega) \right\} < C,
		\end{align}
		where $\alpha_0$, $\beta_0$ are given by \eqref{apbe} and $C$ is a positive constant depending only on $n$ and $\operatorname{diam}(\Omega)$.
	\end{enumerate}
	
\end{theorem}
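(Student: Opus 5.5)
The plan is to derive Theorem \ref{thm MT type inequality} from the isocapacitary inequalities of Theorem \ref{iso}, following the same layer-cake scheme used in the proof of Theorem \ref{thm BM inequality}. The one change is that the mass $\mathcal M_k(u)$ is replaced by the energy norm $\|u\|_{\Phi_0^k(\Omega)}$. The key intermediate bound is
\[
\operatorname{Cap}_k(K_t,\Omega)\le \left(\frac{\|u\|_{\Phi_0^k(\Omega)}}{t}\right)^{k+1},\qquad K_t=\{u<-t\},\ t>0 .
\]

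To prove this bound, let $w=u_{K_t}$ be the extremal (capacitary) function of $K_t$. Then $w=-1$ on $K_t$, $S_k[w]$ is supported on $\overline{K_t}$, and $\int_\Omega(-w)S_k[w]=\operatorname{Cap}_k(K_t,\Omega)$.
\begin{itemize}
\item Since $-u\ge t$ on $K_t$, we have $t\,\operatorname{Cap}_k(K_t,\Omega)\le \int_\Omega(-u)S_k[w]$.
\item The right-hand side is a mixed energy. By the integration by parts in the proof of Lemma \ref{hessian comparison}, combined with the Cauchy--Schwarz/H\"older type inequality for mixed Hessian energies (Trudinger--Wang), it satisfies
\[
\int_\Omega(-u)S_k[w,\dots,w]\le I_k(u)^{\frac1{k+1}}I_k(w)^{\frac k{k+1}} .
\]
\item Since $I_k(w)=\operatorname{Cap}_k(K_t,\Omega)$, this gives $t\operatorname{Cap}\le \|u\|\operatorname{Cap}^{k/(k+1)}$, which is the claimed bound.
\end{itemize}
If $w$ is not smooth, we approximate it by smooth $k$-convex functions, exactly as in Lemma \ref{hessian comparison}. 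The same works for $u$ near the level set.

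For case (1), Theorem \ref{iso}(i) with $q\le \frac{n(k+1)}{n-2k}$ gives $|K_t|\le C(\|u\|/t)^{q}$. The layer-cake computation from the proof of Theorem \ref{thm BM inequality}, splitting at $t=\|u\|$, then yields $\|u\|_{L^{p+1}}\le C\|u\|$ for every $p+1<k^*$. At the endpoint $p+1=k^*$ this argument only gives a weak-$L^{k^*}$ bound. For the strong endpoint I would first try a truncation trick. Apply the estimate to the $k$-convex functions $\max(u,-2^{j+1}s)-\max(u,-2^js)$ (shifted appropriately), whose energies are bounded by the energy of $u$ on the dyadic layer $\{-2^{j+1}s<u\le -2^j s\}$. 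Then sum over $j$ using the superadditivity $\sum_j a_j^{q/(k+1)}\le(\sum_j a_j)^{q/(k+1)}$, valid since $q\ge k+1$. If that fails, I would fall back on Wang's original variational/symmetrization argument.

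For case (2), Theorem \ref{iso}(ii) gives
\[
|K_t|\le C(n)|\Omega|\exp\!\bigl(-\alpha_0 (t/\|u\|)^{\beta}\bigr).
\]
Writing $\int_\Omega \exp(\alpha(-u/\|u\|)^\beta)\,dx=|\Omega|+\int_0^\infty \alpha\beta s^{\beta-1}e^{\alpha s^\beta}|K_{s\|u\|}|\,ds$ yields a bound $C(n)\frac{\alpha_0}{\alpha_0-\alpha}|\Omega|$ for every $\alpha<\alpha_0$, exactly as in Theorem \ref{thm BM inequality}(2). Because $|\Omega|$ is controlled by $\operatorname{diam}(\Omega)$, the constant depends only on $n$ and $\operatorname{diam}(\Omega)$.

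The main obstacle is the critical constant $\alpha=\alpha_0$, together with the endpoint $p+1=k^*$ in case (1). There the level-set bound alone gives a divergent integral, because one loses the fact that the sets $K_t$ cannot all be extremal simultaneously. For $\alpha=\alpha_0$ I would try the same dyadic truncation as above: apply Theorem \ref{iso}(ii) to the truncations of $u$ between consecutive levels, which share the total energy $I_k(u)$. This should reproduce Moser's one-dimensional lemma. If that is too lossy, I would reduce to radial functions on a ball via a $k$-Hessian symmetrization (Talenti/Tso type), and then apply Moser's original one-dimensional argument to the radial profile.
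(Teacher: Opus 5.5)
The paper gives no proof of this theorem; it is quoted from the literature. Judged on its own, your reduction to Theorem \ref{iso} is correct in the subcritical range. The bound $\operatorname{Cap}_k(K_t,\Omega)\le(\|u\|_{\Phi_0^k(\Omega)}/t)^{k+1}$ holds; it only says that $u/t$ is an admissible competitor for the capacitary energy. The layer-cake computation then yields (1) for $p+1<k^*$ and (2) for $\alpha<\alpha_0$.

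The Maz'ya truncation for the endpoint $p+1=k^*$ is the right idea but needs repair. The function $\max(u,-2^{j+1}s)-\max(u,-2^js)=\min\{0,\max(u,-2^{j+1}s)+2^js\}$ has a concave kink along $\{u=-2^js\}$, so it is not $k$-convex in $\Omega$. Use instead $w_j=\max(u,-2^{j+1}s)+2^js$ on $\Omega_j=\{u<-2^js\}$, together with the monotonicity $\operatorname{Cap}_k(E,\Omega)\le\operatorname{Cap}_k(E,\Omega_j)$. You must also check that $\sum_j I_k(w_j)\le I_k(u)$. This holds because $I_k(w_j)$ equals the integral over the layer of the density $\frac1k S_k^{ij}[u]u_iu_j$, where $S_k^{ij}=\partial S_k/\partial u_{ij}$; the singular part of $\mu_k[w_j]$ on $\{u=-2^{j+1}s\}$ cancels the boundary term. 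Since $k^*/(k+1)\ge 1$, summing over $j$ then gives the strong endpoint.

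The genuine gap is the critical case $\alpha=\alpha_0$ in (2). The statement includes it with a constant independent of $\alpha$, but your bound $C(n)\alpha_0|\Omega|/(\alpha_0-\alpha)$ degenerates there, and neither proposed remedy closes it. Dyadic truncation cannot recover the lost logarithm from Theorem \ref{iso}(ii). Suppose $I_k(u)=1$ and the energy carried by $N_T=\{u<-T\}$ is $e$. Applying Theorem \ref{iso}(ii) with $\beta=\beta_0$ first in $\Omega$ and then in $N_T$ gives
\[
|N_{T+S}|\le C(n)^2|\Omega|\exp\Bigl(-\alpha_0\Bigl[\frac{T^{\beta_0}}{(1-e)^{1/k}}+\frac{S^{\beta_0}}{e^{1/k}}\Bigr]\Bigr).
\]
By H\"older the bracket is $\ge (T+S)^{\beta_0}$, with equality attainable. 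So you merely reproduce the borderline bound $|N_\tau|\lesssim e^{-\alpha_0\tau^{\beta_0}}$ with a worse constant, and $m$ iterations cost $C(n)^m$.

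Turning this into Moser's one-dimensional lemma would require the isocapacitary inequality with constant exactly $1$ for every condenser $(N_{t_2},N_{t_1})$, which Theorem \ref{iso}(ii) does not provide. In fact Theorem \ref{iso}(ii) follows from the critical case of \eqref{MT type inequality} by testing it on a capacitary potential, so it cannot be expected to give that case back.

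The symmetrization fallback is not available off the shelf either. A P\'olya--Szeg\H{o} inequality for $I_k$ over general $k$-convex functions would require Alexandrov--Fenchel type inequalities for non-convex level sets; this is why the cited proofs use a variational (gradient-flow) reduction to radial functions instead. As written, your argument proves (2) only for $\alpha<\alpha_0$, with an $\alpha$-dependent constant.
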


\begin{proof}[Proof of Theorem~\ref{mean-field-compactness}]
We distinguish two cases.  
\medskip

\noindent\textbf{Case 1:} There exists $\delta>0$ such that
\begin{align}\label{regular condition}
	\int_{\Omega}e^{-(\alpha_0+\delta)u_j}\,dx\leq C_\delta\quad
	\text{for all } j.
\end{align}
In this case we will prove that alternative (1) of the theorem holds.
	
Denote $\mu_j=S_k[u_j]\,dx$. 
For $t<\alpha_0+\delta$, we have $\mathcal{L}_{t}(u_j)\ge -C_t$. From (\ref{D functional}),
\begin{align*}
	\frac{D(\mu_j)}{t}\ge \mathcal{L}_{t}(u_{j})-\int_\Omega u_{j}\,\mu_j\,\ge -C_t-\int_\Omega u_{j}\,\mu_j.
\end{align*}
Thus
\begin{align*}
F_{\gamma}(\mu_j)
&=-\frac{k}{k+1}\int_\Omega u_{j}\,\mu_j-\frac{D(\mu_j)}{\gamma}\\
&\le \left(\frac{t}{\gamma}-\frac{k}{k+1}\right)\int_\Omega u_{j}\,\mu_j + \frac{t}{\gamma}C_t.
\end{align*}
For any $\gamma<\frac{k+1}{k}(\alpha_0+\delta)$, we can find $0<t<\alpha_0+\delta$ such that $\frac{t}{\gamma}-\frac{k}{k+1}>0$. Therefore there exists $\delta'>0$ such that 
\begin{align}\label{F critical upper bd}
	F_{\frac{k+1}{k}\alpha_0+\delta'}(\mu_j)\le C.
\end{align}

By Lemma \ref{increase of G} and \eqref{F G eq}, we get 
\begin{align}
		-C:=G_{\gamma_1}(u_1)
		&\le G_{\gamma_j}(u_j)=F_{\gamma_j}(\mu_j)+\frac{\log |\Omega|}{\gamma_j}.\nonumber
\end{align}
Thus
\begin{align}\label{Fj lower bd}
	F_{\gamma_j}(\mu_j)\ge -C' .
\end{align}
	By definition,
	\begin{eqnarray}
	D(\mu_j)&=&-\gamma_jF_{\gamma_j}(\mu_j)-\frac{k\gamma_j}{k+1}\int_\Omega u_jS_k[u_j],\label{D expression}
	\end{eqnarray}
	Using \eqref{D expression}, we have
	\begin{align}
	F_{\frac{k+1}{k}\alpha_0+\delta'}(\mu_j)
	&=-\frac{k}{k+1}\int_\Omega u_jS_k[u_j]-\frac{D(\mu_j)}{\frac{k+1}{k}\alpha_0+\delta'}\nonumber\\
	&=\left(\frac{\gamma_j}{\frac{k+1}{k}\alpha_0+\delta'}-1\right)\frac{k}{k+1}\int_\Omega u_jS_k[u_j]+C_1F_{\gamma_j}(u_j).\label{F energy rela}
	\end{align}
	Note that \[\frac{\gamma_j}{\frac{k+1}{k}\alpha_0+\delta'}-1\le -C_{\delta'}<0.\] 
	Combining \eqref{F critical upper bd} , \eqref{Fj lower bd} and \eqref{F energy rela}, we obtain $||u_j||_{\Phi_{0}^k(\Omega)}$ is uniformly bounded.
	
	By Moser-Trudinger inequality, we get $\int_\Omega e^{-pu_j}\,dx\le C_p$ for any $p>0$. 
	Since $e^{-u_j}\to e^{-u}$ in measure and $e^{-pu_j}$ is uniformly integrable, the Vitali convergence theorem implies that $\int_\Omega e^{-p u_j}\,dx\to \int_\Omega e^{-p u}\,dx$.
	Thus $u$ is a weak solution of equation \eqref{mean field equation of k}  for $\gamma=\frac{k+1}{k}\alpha_0$.
	
\medskip
\noindent\textbf{Case 2:}	If \eqref{regular condition} does not hold, i.e., for any
$\delta > 0$,  the sequence $\int_{\Omega}e^{-(\alpha_0+\delta)u_j}\,dx$ is unbounded. Note that $u_j\to u$ in $L^1_{\text{loc}}(\Omega)$. We choose a subsequence $u_{j_k}\to u$ a.e. in $\Omega$. Since there is no blow-up point on the boundary, there exists a compact $K \subset \Omega$ and $M>0$ such that $u>-M$ a.e on $\Omega\setminus K$.  Since $\int_{\Omega}e^{-(\alpha_0+\delta)u_{j_k}}\,dx$ is unbounded, we have	
	\begin{align}\label{singular condition}
		\int_{K}e^{-(\alpha_0+\delta)u}\,dx=+\infty.
	\end{align}
	
	Now we claim there is $x_0\in K$ such that $\mu_k[u](\{x_0\})\ge 1$.
	Otherwise for any $x\in K$, $\mu_k[u](\{x\})< 1$. By Corollary \ref{loc version of BM inequality}, there is $r_x, c_x>0$ such that 
	\[\int_{B_{r_x}(x)}e^{-(\alpha_0+c_x)}\,dx<+\infty.\] 
	Since $K\subset \cup_{x\in K}B_{r_x}(x)$, so we can find finite balls such that $K\subset \cup_{ i=1}^mB_{r_{x_i}}(x_i)$. Thus we get some $\delta'>0$ and 
	\[\int_{K}e^{-(\alpha_0+\delta')u}\,dx<+\infty,\] 
	which contradicts \eqref{singular condition}. 
	
	Note that $u_j\to u$ in $L^1_{\text{loc}}(\Omega)$ and $\int_{\Omega}S_k[u_j]\,dx=1$, so we have 
	$\mathcal M_k(u)\leq 1$. Therefore we have $\mu_k[u](\{x_0\})=1$, i.e., $\mu_k[u]=\delta_{x_0}$. Then by Theorem 3.6 in \cite{Lab02},
	\[u(x)=\left[\tbinom{n}{k}\omega_n\right]^{\frac{1}{k}}\log|x-x_0|+O(1).\]
\end{proof}

\end{document}